\documentclass[11pt,a4paper]{article}
\usepackage{amsmath}
\usepackage{amssymb}
\usepackage{amsfonts}
\usepackage{amsthm}
\usepackage{relsize}
\usepackage{setspace}
\usepackage{geometry}
\usepackage{url}
\usepackage{enumerate}

\geometry{includehead, includefoot, left=20mm, right=20mm, top=20mm, bottom=20mm}

\newtheorem{thm}{Theorem}[section]
\newtheorem{lem}[thm]{Lemma}
\newtheorem{prop}[thm]{Proposition}
\newtheorem{cor}[thm]{Corollary}

\newtheorem*{thma}{Theorem A}
\newtheorem*{thmb}{Theorem B}

\theoremstyle{definition}
\newtheorem{defn}[thm]{Definition}
\newtheorem{que}{Question}

\newcommand{\tF}{\mathrm{(F)}}
\newcommand{\Ilhd}{I^\lhd}

\newcommand{\End}{\mathrm{End}}
\newcommand{\Hom}{\mathrm{Hom}}
\newcommand{\Aut}{\mathrm{Aut}}

\newcommand{\Core}{\mathrm{Core}}

\newcommand{\Con}{\mathrm{Con}}

\newcommand{\bN}{\mathbb{N}}

\newcommand{\bZ}{\mathbb{Z}}

\newcommand{\mcA}{\mathcal{A}}

\newcommand{\mcC}{\mathcal{C}}

\newcommand{\mcF}{\mathcal{F}}

\newcommand{\mcK}{\mathcal{K}}

\newcommand{\mcR}{\mathcal{R}}

\setlength{\parindent}{0pt}
\setlength{\parskip}{3mm}

\begin{document}

\title{On endomorphisms of profinite groups}

\author{Colin D. Reid\\
Universit\'{e} catholique de Louvain\\
Institut de Recherche en Math\'{e}matiques et Physique (IRMP)\\
Chemin du Cyclotron 2, 1348 Louvain-la-Neuve\\
Belgium\\
colin@reidit.net}

\maketitle

\abstract{We obtain some general restrictions on the continuous endomorphisms of a profinite group $G$ under the assumption that $G$ has only finitely many open subgroups of each index (an assumption which automatically holds, for instance, if $G$ is finitely generated).  In particular, given such a group $G$ and a continuous endomorphism $\phi$ we obtain a semidirect decomposition of $G$ into a `contracting' normal subgroup and a complement on which $\phi$ induces an automorphism; both the normal subgroup and the complement are closed.  If $G$ is isomorphic to a proper open subgroup of itself, we show that $G$ has an infinite abelian normal pro-$p$ subgroup.}

\section{Introduction}

Let $G$ be a finitely generated residually finite group and let $\phi$ be an endomorphism.  It is a well-known theorem of Mal'cev that $G$ is Hopfian, that is, if $\phi$ is surjective, then it is an isomorphism.  On the other hand, there can certainly be injective endomorphisms of $G$ that are not surjective, such as the map $x \mapsto 2x$ for $G = \bZ$.

In this paper, we will focus on profinite groups, and all homomorphisms under consideration are understood to be continuous.  Here the obvious analogue of `finitely generated' is `topologically finitely generated', but in fact it is appropriate to assume a somewhat weaker property:

\begin{defn}Let $G$ be a profinite group.  $G$ is \emph{of type $\tF$} (or an \emph{$\tF$-group}) if $G$ has finitely many open subgroups of index $n$ for every integer $n$.  Equivalently, if we let $\Ilhd_n(G)$ be the intersection of all open normal subgroups of index at most $n$ in $G$, then $\Ilhd_n(G)$ is open in $G$ for all $n$.\end{defn}

Finitely-generated profinite groups are necessarily of type $\tF$ (see for instance \cite{RZ} Proposition 2.5.1.).  In the other direction, a pronilpotent group $G$ is of type $\tF$ if and only if its Sylow subgroups are all finitely generated, but this does not require any overall bound on the number of generators across different primes; thus one can easily construct examples of infinitely generated pronilpotent $\tF$-groups.  In addition, the class of $\tF$-groups includes all just infinite profinite groups, which need not be finitely generated even if they are hereditarily just infinite (see for instance \cite{WilJI}).

Profinite groups of type $\tF$ are Hopfian as topological groups (see \cite{RZ} Proposition 2.5.2.), but not co-Hopfian in general.  Given a profinite group $G$, the question of whether $G$ has any proper \emph{open} subgroups isomorphic to itself is particularly interesting in the theory of totally disconnected, locally compact groups.

Given an endomorphism $\phi$ of $G$, one can define two subgroups of $G$ which measure the extent to which positive powers of $\phi$ fail to be automorphisms:
\[ \Con(\phi) = \{ x \in G \mid \lim_{n \rightarrow +\infty} x^{\phi^n} = 1 \}; \;  \phi_+(G) = \bigcap_{n \ge 0} \phi^n(G).\]

\paragraph{Example}Let $G = H \rtimes K$, where $H$ is the additive group of $\bZ_p$ and $K$ is the group of units of $\bZ_p$, acting by multiplication.  Let $\phi$ be the endomorphism of $G$ that centralises $K$ and acts on $H$ as multiplication by $p$.  Then $\Con(\phi) = H$ and $\phi_+(G) = K$.

Our first main result shows that this splitting as a semidirect product is a general phenomenon for profinite groups of type $\tF$.

\begin{thma}Let $G$ be a profinite group of type $\tF$ and let $\phi$ be an endomorphism of $G$.  Then $G = \Con(\phi) \rtimes \phi_+(G)$ where $\Con(\phi)$ and $\phi_+(G)$ are both closed subgroups.  The restriction of $\phi$ to $\phi_+(G)$ is an automorphism, and we have $\phi^k(\Con(\phi)) = \Con(\phi) \cap \phi^k(G)$ for all $k \ge 0$.\end{thma}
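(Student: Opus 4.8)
The plan is to reduce the whole statement to the case of finite groups, where it is essentially Fitting's lemma, and then reassemble the pieces as an inverse limit. The type $\tF$ hypothesis enters exactly once, but crucially: it guarantees that the characteristic subgroups $K_n := \Ilhd_n(G)$ are open. Since $K_n$ is the intersection of all open normal subgroups of index at most $n$, and $\phi^{-1}(M)$ is again open normal of index at most $[G:M]$ whenever $M$ is, one checks $\phi(K_n) \subseteq K_n$. Thus the $K_n$ form a descending neighbourhood base of $\phi$-invariant open normal subgroups with $\bigcap_n K_n = 1$; hence $\phi$ descends to an endomorphism $\psi_n$ of each finite quotient $Q_n := G/K_n$, compatibly with the projections, so that $G = \varprojlim Q_n$ and $\phi = \varprojlim \psi_n$.

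First I would record the finite case: for a finite group $Q$ and endomorphism $\psi$, choosing $m$ large gives $Q = A \rtimes B$ with $A = \ker \psi^m = \{q : \psi^j(q) = 1 \text{ for some } j\}$ and $B = \psi^m(Q) = \bigcap_j \psi^j(Q)$, on which $\psi$ restricts to an automorphism, and the factorisation of each $q$ as $ab$ is unique. Applying this to each $(Q_n,\psi_n)$ yields subgroups $A_n, B_n$. Using the $\phi$-equivariance of the projections $Q_{n'} \to Q_n$ together with the invariant descriptions above, these projections carry $A_{n'}$ into $A_n$ and $B_{n'}$ onto $B_n$, so $\{A_n\}$ and $\{B_n\}$ are inverse subsystems; I set $A := \varprojlim A_n$ and $B := \varprojlim B_n$, both closed in $G$.

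Next I would verify that $A, B$ give the decomposition and match the intrinsic definitions. Normality of $A$ and triviality of $A \cap B$ pass to the limit from each $Q_n$; surjectivity of $A \times B \to G$ holds because the unique finite factorisations $gK_n = a_n b_n$ are automatically compatible, defining $a \in A$, $b \in B$ with $g = ab$. As $A \times B \to G$ is then a continuous bijection of compact Hausdorff spaces, it is a homeomorphism, giving $G = A \rtimes B$ topologically. To see $B = \phi_+(G)$ I would use that a continuous image of a descending family of compacta commutes with intersection: the image of $\phi_+(G) = \bigcap_j \phi^j(G)$ in $Q_n$ is $\bigcap_j \psi_n^j(Q_n) = B_n$, so $\phi_+(G)K_n = BK_n$ for all $n$ and hence $\phi_+(G) = B$. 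For $A = \Con(\phi)$, the persistence of $\psi_n^j(q)=1$ lets one rephrase $x \in \Con(\phi)$ as: for every $n$ the image of $x$ in $Q_n$ eventually lies in $A_n$, which says exactly $x \in A$. That $\phi$ restricts to an automorphism of $B$ is then immediate: surjectivity is the same compactness interchange $\phi(\bigcap_j \phi^j(G)) = \bigcap_{j \ge 1}\phi^j(G) = \phi_+(G)$, and injectivity is inherited as an inverse limit of the injective maps $\psi_n|_{B_n}$.

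Finally, for $\phi^k(\Con(\phi)) = \Con(\phi) \cap \phi^k(G)$, the inclusion $\subseteq$ follows from $\phi$-invariance of $\Con(\phi)$; for $\supseteq$, writing $x = \phi^k(g) \in \Con(\phi)$ and decomposing $g = a'b$ with $a' \in A$, $b \in B$ gives $x = \phi^k(a')\phi^k(b)$ with $\phi^k(a') \in A$ and $\phi^k(b) \in B$, so $x \in A$ forces $\phi^k(b) \in A \cap B = 1$ and $x = \phi^k(a')$. I expect the main obstacle to be the bookkeeping that glues the finite Fitting decompositions into a genuine topological semidirect product and correctly identifies the limits with the dynamically defined $\Con(\phi)$ and $\phi_+(G)$ — in particular the repeated interchange of intersections with continuous images and the uniqueness of the finite factorisations that secures compatibility across the system; the role of type $\tF$ is confined to, but indispensable for, producing the $\phi$-invariant open subgroups $K_n$ in the first step.
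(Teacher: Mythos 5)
Your proposal is correct, but it takes a genuinely different route from the paper's. Both arguments share the same first step: type $\tF$ together with the index bound $|G:\phi^{-1}(M)| \le |G:M|$ makes each $\Ilhd_n(G)$ a $\phi$-invariant open normal subgroup, yielding a countable descending $\phi$-invariant base (this is exactly Proposition~\ref{tfrelstab}(i) in the cyclic case). From there the paper does \emph{not} pass to finite quotients: in Theorem~\ref{splitthm} it works directly inside $G$ with the relative contraction subgroups $\Con(\Lambda,K)$ (for $K = \Ilhd_n(G)$ these coincide with your $\pi_n^{-1}(A_n)$), shows they are open, and extracts the splitting from an index count
\[ |G:\Con(\Lambda,K)| \ge |\lambda(G):\lambda(G)\cap\Con(\Lambda,K)| \ge |\lambda(G):\lambda(\Con(\Lambda,K))| = |G:\Con(\Lambda,K)|, \]
which forces equality throughout, combined with the compactness lemmas (Lemma~\ref{obchain}, Corollary~\ref{obcor}) to push the resulting identities $\Con(\Lambda,K)\lambda(G)=G$ and $\Con(\Lambda,K)\cap\lambda(G)=\lambda(\Con(\Lambda,K))$ down to the intersection over all $K$. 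You instead invoke Fitting's lemma for an endomorphism of a finite group and glue the decompositions $Q_n = A_n \rtimes B_n$ along the inverse system, using uniqueness of the finite factorisations for compatibility and the interchange of continuous images with descending intersections of compacta to identify $B$ with $\phi_+(G)$; your identification $\Con(\phi)=A$ (via ``once killed, always killed'' at each finite level) and your closing argument for $\phi^k(\Con(\phi)) = \Con(\phi)\cap\phi^k(G)$ are both sound. What your approach buys is economy and transparency: Theorem A becomes literally an inverse limit of finite Fitting decompositions, using nothing beyond standard profinite formalism. What the paper's approach buys is generality: Theorem~\ref{splitthm} applies to an arbitrary semigroup $\Lambda$ of endomorphisms satisfying conditions (I) and (II) on a countably based group with $\Lambda$ merely stable, and that generality, together with the ``regulated'' machinery, is what feeds Corollary~\ref{conreg} and the proof of Theorem B. Your argument is tailored to the cyclic monoid $\{\phi^k\}$, where an eventual-kernel/eventual-image decomposition exists at each finite level; extending it to the semigroup setting would require a Fitting-type statement for semigroups of endomorphisms of finite groups, which is not immediate.
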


Of course, any profinite group (of type $\tF$) can appear as $\Con(\phi)$ if $\phi$ is the zero endomorphism.  More interesting is the case of \emph{open self-embeddings}, that is, injective endomorphisms $\phi$ of $G$ whose image is open.  In this case one has an associated ascending HNN extension $L = G \ast_\phi$ of $G$, and the topology of $G$ naturally extends to a group topology on $L$ with $G$ as an open subgroup.  In effect, we are therefore considering special cases of the automorphisms and associated contraction groups as considered by Baumgartner, Gl\"{o}ckner and Willis in \cite{BW} and \cite{GW}.  If the images of $G$ under the non-negative powers of $\phi$ have trivial intersection, then $G \ast_\phi$ is a contraction group in the sense of \cite{GW}, which gives a good description of the structure of such groups.  In the special case under consideration here, we can eliminate the case occurring in (\cite{GW}, Theorem A) in which $G$ contains a Cartesian product of non-abelian finite simple groups.

\begin{defn}Given profinite groups $G$ and $H$, write $\Hom_o(G,H)$ for the set of injective homomorphisms from $G$ to $H$ with image open in $H$, and define $\End_o(G) := \Hom_o(G,G)$.  Given $\Lambda \subseteq \End_o(G)$, define
\[ O_\Lambda(G) = \overline{\langle \Con(\lambda) \mid \lambda \in \Lambda \rangle}.\]\end{defn}

\begin{thmb}Let $G$ be a profinite group of type $\tF$ and let $\Lambda \subseteq \End_o(G)$.

\begin{enumerate}[(i)]
\item The group $O_\Lambda(G)$ is pronilpotent.
\item Suppose that the set $\{\lambda(G) \mid \lambda \in \Lambda\}$ forms a base of neighbourhoods of the identity.  Then $G = O_\Lambda(G)$, so $G$ is pronilpotent.\end{enumerate}\end{thmb}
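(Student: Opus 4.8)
The plan is to reduce (i) to the single-endomorphism case and then invoke the structure theory of contraction groups, and to deduce (ii) from (i) together with Theorem A.

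For (i) I would first record two reductions. The first is that each $\Con(\lambda)$ is a closed \emph{normal} subgroup of $G$: given $g \in G$, $x \in \Con(\lambda)$ and an open normal subgroup $U \unlhd_o G$, we have $\lambda^n(x) \in U$ for all large $n$, whence $\lambda^n(gxg^{-1}) = \lambda^n(g)\lambda^n(x)\lambda^n(g)^{-1} \in U$ because $U$ is normal; thus $gxg^{-1} \in \Con(\lambda)$. The second reduction is that the closed subgroup generated by a family of closed normal pronilpotent subgroups is again pronilpotent: in any finite continuous quotient $Q$ of $G$ the images of the $\Con(\lambda)$ are normal nilpotent subgroups, so their join lies in the Fitting subgroup of $Q$ and is nilpotent, and a profinite group all of whose finite continuous quotients are nilpotent is pronilpotent. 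Hence it suffices to prove that a \emph{single} $\Con(\lambda)$ is pronilpotent.

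Fix $\lambda$ and put $C = \Con(\lambda)$. By Theorem A the restriction $\psi := \lambda|_C$ is injective with $\psi(C) = C \cap \lambda(G)$ open in $C$, so $\psi \in \End_o(C)$; applying Theorem A to the pair $(C,\psi)$ and using $\Con(\psi) = C$ forces $\psi_+(C) = \bigcap_n \psi^n(C) = 1$, so the open subgroups $\psi^n(C)$ form a base of neighbourhoods of the identity. Thus $C$ is a compact contraction group: it carries an open self-embedding all of whose positive powers drive every element to the identity. Passing to the ascending HNN extension $C \ast_\psi$, the group $C$ is the contraction group of the inner automorphism induced by the stable letter, so the structure theory of \cite{GW} applies. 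The plan is to read off from that theory that $C$ is pronilpotent unless it involves a Cartesian product of non-abelian finite simple groups (with a shift as contracting automorphism), this being precisely the exceptional case of (\cite{GW}, Theorem~A).

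The main obstacle is therefore to eliminate this exceptional case using type $\tF$, and I expect this to be the heart of the argument. I would show that such a Cartesian product $\prod_{\bN} S$ cannot arise as the relevant $\psi$-invariant section of $C$ inside the type-$\tF$ group $G$. The mechanism is that, $G$ being compact, its conjugation action on the set of simple factors has only finite orbits, whereas $\psi$ acts as a shift and so links infinitely many of these factors into a single progression; taking normal cores in $G$ of the coordinate subgroups should then manufacture infinitely many open normal subgroups of $G$ of one fixed index, contradicting the definition of type $\tF$ (equivalently, the openness of $\Ilhd_n(G)$). The delicate point is the transfer from this thin normal section back to honest bounded-index open subgroups of $G$, and it is exactly here that the hypothesis that $G$ itself --- rather than the subgroup $C$ --- is of type $\tF$ must be exploited, since type $\tF$ is not inherited by arbitrary closed subgroups.

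Finally, (ii) follows from (i) and Theorem A. For each $\lambda \in \Lambda$, Theorem A gives $G = \Con(\lambda)\,\lambda_+(G)$, and since $\Con(\lambda) \subseteq O_\Lambda(G)$ while $\lambda_+(G) = \bigcap_n \lambda^n(G) \subseteq \lambda(G)$, we obtain $G = O_\Lambda(G)\,\lambda(G)$ for every $\lambda$. Now $O_\Lambda(G)$ is closed, and each set $O_\Lambda(G)\lambda(G)$ is a union of cosets of the open subgroup $\lambda(G)$, hence open and closed; as $\{\lambda(G)\mid\lambda\in\Lambda\}$ is a base of neighbourhoods of the identity, the standard fact that a closed subgroup is the intersection of its products with such a base yields $G = \bigcap_{\lambda\in\Lambda} O_\Lambda(G)\lambda(G) = O_\Lambda(G)$. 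Pronilpotency of $G$ is then immediate from (i).
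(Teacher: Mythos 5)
Your part (ii) is correct and essentially coincides with the paper's argument: both rest on the identity $G = \Con(\lambda)\lambda_+(G) \subseteq O_\Lambda(G)\lambda(G)$ supplied by Theorem A, combined with the fact that a closed subgroup is the intersection of its products with a neighbourhood base of the identity. Your opening reductions in (i) --- normality of each $\Con(\lambda)$ and the Fitting-type argument reducing to a single $\lambda$ --- also match the paper's first step.

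The core of your (i), however, has a genuine gap, and you have in effect flagged it yourself. The paper does \emph{not} prove pronilpotency of $\Con(\lambda)$ by invoking \cite{GW} and excluding the Cartesian-product-of-non-abelian-finite-simple-groups case; it proves pronilpotency by a self-contained argument, and the exclusion of the exceptional case of (\cite{GW}, Theorem A) is then a \emph{consequence} (Corollary \ref{solcor}), not an ingredient. Your plan inverts this logical order and leaves exactly the hard step unproven: ``taking normal cores in $G$ of the coordinate subgroups should then manufacture infinitely many open normal subgroups of $G$ of one fixed index'' is a hope, not an argument. The obstruction is the one you name but do not overcome: the Cartesian product of simple groups lives inside $C = \Con(\lambda)$, a closed normal subgroup of possibly \emph{infinite} index in $G$, and there is no evident way to convert its coordinate subgroups --- which have bounded index only \emph{in $C$} or in a section of $C$ --- into open normal subgroups of bounded index \emph{in $G$}; the type $\tF$ hypothesis says nothing directly about closed subgroups of infinite index, which is precisely why it fails to be inherited by them. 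Solving this transfer problem is the main technical content of the paper: it introduces the notion of a set of endomorphisms being \emph{regulated} by a set $\Omega$ of automorphisms, proves that regulatedness --- unlike type $\tF$ --- passes from $G$ to $\Con(\lambda)$ when $\Omega$ is enlarged by the conjugation action of the complement $\lambda_+(G)$ (Proposition \ref{tfrelstab}(ii), Corollary \ref{conreg}), and then runs a finiteness argument entirely in terms of $\Omega$-invariant open normal subgroups: the descending series $N_i$ with $|N_i:N_{i+1}| \le k!$, the $\Omega$-Frattini-type subgroup $\Phi^\Omega(G)$, the divisibility $|G_t:G_{t+1}| \mid k$, and finally a central series for an open subgroup $R$, giving pronilpotency. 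Without this machinery or a worked-out substitute, your elimination step remains a conjecture, so the proposed proof of (i) is incomplete at its heart.

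A smaller, repairable point: you apply Theorem A to the pair $(C,\psi)$, but Theorem A requires the ambient group to be of type $\tF$, which $C$ need not be (as you yourself observe later). The conclusion you want does follow legitimately from Theorem A applied to $(G,\lambda)$: by that theorem, $\psi^n(C) = C \cap \lambda^n(G)$, so $\psi_+(C) = C \cap \lambda_+(G) = 1$. That this illegal application slipped in is symptomatic of the same inheritance problem that defeats your main step.
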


When combined with Theorems A and B of \cite{GW}, Theorem B (i) above can be refined to the following:

\begin{cor}\label{solcor}Let $G$ be a profinite group of type $\tF$ and let $\phi$ be an open self-embedding of $G$.  Then $\Con(\phi)$ is an open subgroup of a direct product of finitely many uniform (in particular, torsion-free) pro-$p$ groups that are nilpotent and a pronilpotent soluble group of finite exponent.\end{cor}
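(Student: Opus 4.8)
The plan is to read off the structure of $\Con(\phi)$ from the Gl\"{o}ckner--Willis classification, using Theorem A to see $\Con(\phi)$ as a \emph{compact} contraction group and using Theorem B(i) to kill the non-nilpotent possibilities. First I would realise $\Con(\phi)$ as an honest contraction group in the sense of \cite{GW}: passing to the ascending HNN extension $L = G \ast_\phi$, the map $\phi$ becomes the restriction to $G$ of conjugation by the stable letter, i.e.\ of an automorphism $\alpha$ of $L$, and I would check that the associated contraction group $\{x \in L \mid \alpha^n(x) \to 1\}$ coincides with $\Con(\phi)$ (any element contracting under $\alpha$ eventually lies in the open subgroup $G$ and then contracts inside $G$, and conversely). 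By Theorem A the subgroup $\Con(\phi)$ is closed in $G$, hence compact, so I am dealing with a compact contraction group, to which the theorems of \cite{GW} apply directly.

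Next I would feed this into \cite{GW}. By Theorem A of \cite{GW}, $\Con(\phi)$ splits as a direct product of $p$-components, and by Theorem B of \cite{GW} each $p$-component is the product of a torsion-free $p$-adic analytic part and a torsion part, the latter being either of nilpotent type or built from a Cartesian product of non-abelian finite simple groups. Here Theorem B(i) enters: taking $\Lambda = \{\phi\}$ (note $\phi \in \End_o(G)$) and using that $O_{\{\phi\}}(G) = \Con(\phi)$ because $\Con(\phi)$ is already closed, Theorem B(i) tells me that $\Con(\phi)$ is pronilpotent. A pronilpotent profinite group has only cyclic composition factors, hence no non-abelian finite simple subquotient, so the Cartesian-product-of-simple-groups alternative of (\cite{GW}, Theorem A) cannot occur. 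Thus every torsion $p$-component survives only in nilpotent form; since contraction groups are nilpotent it is in particular soluble, and the \cite{GW} classification of these surviving torsion components shows that each has finite exponent.

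It remains to establish the two finiteness assertions. Writing the pronilpotent group as the product $\Con(\phi) = \prod_p C_p$ of its Sylow subgroups, I would exploit the identity $\phi^k(\Con(\phi)) = \Con(\phi) \cap \phi^k(G)$ from Theorem A: for $k = 1$ this gives
\[ [\Con(\phi) : \phi(\Con(\phi))] = [\Con(\phi) : \Con(\phi) \cap \phi(G)] \le [G : \phi(G)] < \infty. \]
Since this index equals $\prod_p [C_p : \phi(C_p)]$ and each nontrivial $C_p$ is an \emph{infinite} compact $p$-adic contraction group on which $\phi$ cannot be surjective (a contractive automorphism of a nontrivial compact group is impossible), every nontrivial factor contributes a power of $p$ that is at least $p$; finiteness of the product then forces $C_p = 1$ for all but finitely many $p$. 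Hence only finitely many primes occur, so the torsion part $T = \prod_p \tor(C_p)$ is a \emph{finite} product of groups of finite exponent and therefore itself has finite exponent, while the torsion-free part $\prod_p V_p$ is a finite product of compact torsion-free $p$-adic analytic nilpotent groups.

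Finally I would assemble the stated form. Each torsion-free factor $V_p$ is nilpotent, so by enlarging its associated $\bZ_p$-Lie lattice to a powerful one --- rescaling along the lower central series until every bracket becomes divisible by $p$ --- I would realise $V_p$ as an open subgroup of a uniform (hence torsion-free) nilpotent pro-$p$ group $U_p$. Then $\Con(\phi) \cong (\prod_p V_p) \times T$ is an open subgroup of $(\prod_p U_p) \times T$, with the $U_p$ uniform nilpotent and $T$ pronilpotent soluble of finite exponent, which is the required description. The hard part will be the middle step: correctly matching the pieces of the \cite{GW} classification to the two advertised factors and, in particular, verifying that pronilpotency excludes precisely the simple-factor alternative of (\cite{GW}, Theorem A). Once that is done, the finiteness of the prime set is a clean consequence of $[G : \phi(G)] < \infty$, the finite exponent of $T$ follows from the classification, and the uniform enlargement of the torsion-free part is routine $p$-adic Lie theory given the nilpotency supplied by \cite{GW}.
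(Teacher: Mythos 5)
Your overall route---pass to the ascending HNN extension, invoke Theorems A and B of \cite{GW}, and use Theorem B(i) (via $O_{\{\phi\}}(G) = \overline{\Con(\phi)} = \Con(\phi)$, which is correct) to exclude the non-abelian finite simple factors---is exactly the combination the paper intends; the paper gives no further detail than ``combine Theorem B(i) with Theorems A and B of \cite{GW}''. However, your pivotal first step contains a genuine error. The contraction group of $\alpha$ in $L = G \ast_\phi$ is \emph{not} $\Con(\phi)$: it is the ascending union $M = \bigcup_{n \ge 0} \alpha^{-n}(\Con(\phi))$, which strictly contains $\Con(\phi)$ and is non-compact whenever $\Con(\phi) \neq 1$. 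Indeed, by Theorem A, $\phi(\Con(\phi)) = \Con(\phi) \cap \phi(G)$ and $\bigcap_n \phi^n(\Con(\phi)) = \Con(\phi) \cap \phi_+(G) = 1$, so if $\Con(\phi) \neq 1$ then $\phi(\Con(\phi))$ is a \emph{proper} subgroup of $\Con(\phi)$ and the union above is strictly increasing. Your parenthetical justification conflates an element with its forward orbit: if $\alpha^n(x) \to 1$ then $\alpha^N(x) \in \Con(\phi)$ for some $N$, but $x$ itself need not lie in $G$ at all. Your next sentence---that you are ``dealing with a compact contraction group, to which the theorems of \cite{GW} apply directly''---is then self-contradictory: a contraction group in the sense of \cite{GW} carries a contractive \emph{automorphism}, and, as you yourself observe two paragraphs later, no non-trivial compact group admits one; $\phi|_{\Con(\phi)}$ is a proper self-embedding, not an automorphism. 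Consequently all your subsequent applications of \cite{GW} to $\Con(\phi)$ and to its Sylow subgroups $C_p$---in particular the splittings $C_p = V_p \times \tor(C_p)$ and $\Con(\phi) \cong (\prod_p V_p) \times T$---are unsupported. The distinction is not pedantic: it is precisely why the corollary asserts only that $\Con(\phi)$ is an \emph{open subgroup of} such a direct product, rather than equal to one.

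The repair is to apply \cite{GW} to the right object. The group $M$ above is $\alpha$-stable, $\alpha$ is contractive on it, and $M$ is closed in $L$ because $M \cap G = \Con(\phi)$ is closed in $G$ (Theorem A) and a locally closed subgroup of a topological group is closed; so $(M, \alpha|_M)$ is a genuine totally disconnected, locally compact contraction group containing $\Con(\phi)$ as a compact open subgroup. Theorem A of \cite{GW} gives $M = T \times D_{p_1} \times \cdots \times D_{p_k}$ with each $D_{p_i}$ a nilpotent torsion-free $p_i$-adic Lie group and $T$ torsion; pronilpotency of $\Con(\phi)$ rules out a non-abelian simple $F$ among the composition factors of $T$ supplied by Theorem B of \cite{GW}, since such a factor would force the compact open subgroup $\Con(\phi)$ to contain a Cartesian product of copies of $F$; hence $T$ is soluble of finite exponent. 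Then $\Con(\phi)$ is an open subgroup of $\pi_T(\Con(\phi)) \times \prod_i \pi_{D_{p_i}}(\Con(\phi))$, where $\pi_T(\Con(\phi))$ is pronilpotent (a quotient of $\Con(\phi)$), soluble and of finite exponent (a subgroup of $T$), and each $\pi_{D_{p_i}}(\Con(\phi))$ is compact open in $D_{p_i}$; since contraction is uniform on compact sets, $\alpha^m(\pi_{D_{p_i}}(\Con(\phi))) \le U_i$ for any fixed uniform compact open subgroup $U_i \le D_{p_i}$ and large $m$, so $\pi_{D_{p_i}}(\Con(\phi))$ is an open subgroup of the uniform group $\alpha^{-m}(U_i)$. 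Note that this last step also replaces your Lie-lattice rescaling, which goes in the wrong direction: rescaling produces uniform \emph{open subgroups} of $V_p$, not uniform overgroups. Finally, your index argument showing that only finitely many primes occur is correct but redundant, since \cite{GW} already yields finitely many analytic factors and a torsion part of finite exponent.
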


As a result we obtain the following, which generalises Theorem E (i) of \cite{Rei}.

\begin{cor}Let $G$ be a profinite group of type $\tF$.  Suppose that there is a proper open subgroup of $G$ that is isomorphic to $G$ itself.  Then $G$ has an infinite abelian pro-$p$ normal subgroup.\end{cor}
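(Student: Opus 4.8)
The plan is to reduce the hypothesis to the framework of Theorems A and B and then locate an infinite abelian pro-$p$ characteristic subgroup inside $\Con(\phi)$.

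First I would fix a proper open subgroup $H$ of $G$ together with an isomorphism $G \to H$, and let $\phi \in \End_o(G)$ be its composite with the inclusion $H \hookrightarrow G$; thus $\phi$ is injective, $\phi(G) = H$ is open, and $[G:\phi(G)] > 1$ is finite. By Theorem A we have $G = \Con(\phi) \rtimes \phi_+(G)$ with $C := \Con(\phi)$ closed and normal, and $\phi(C) = C \cap \phi(G)$. Since $\phi(G) \supseteq \phi_+(G)$ we have $C\phi(G) = G$, so the second isomorphism theorem gives $[G:\phi(G)] = [C : C \cap \phi(G)] = [C:\phi(C)] > 1$. As $\phi$ restricts to an isomorphism from $C$ onto the proper finite-index subgroup $\phi(C)$, the group $C$ cannot be finite. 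Moreover, $\Con(\phi)$ is closed by Theorem A, so applying Theorem B(i) with $\Lambda = \{\phi\}$ gives $C = O_{\{\phi\}}(G)$ pronilpotent; and Corollary \ref{solcor} shows $C$ is soluble, being an open subgroup of a direct product of a nilpotent group and a soluble group.

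Next I would pass to a single prime. Writing $C = \prod_p P_p$ for its decomposition into Sylow subgroups, $\phi$ maps each $P_p$ into itself (a continuous image of a pro-$p$ group is pro-$p$, and lands in $C$), so $[C : \phi(C)] = \prod_p [P_p : \phi(P_p)]$. This product is finite and exceeds $1$, so there is a prime $p$ with $\phi(P_p) \ne P_p$; for this $p$ the restriction of $\phi$ embeds $P_p$ as a proper subgroup of itself, whence $P_p$ is an infinite soluble pro-$p$ group. Being the unique Sylow pro-$p$ subgroup, $P_p$ is characteristic in $C$ and hence normal in $G$.

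Finally, inside the infinite soluble pro-$p$ group $P_p$ I would descend the derived series and let $R$ be its last infinite term, so that $R' = [R,R]$ is finite while $R$ is infinite. Setting $R_1 := C_R(R')$, a characteristic open subgroup of $R$ in which $R'$ is central, I obtain an infinite group of nilpotency class at most $2$, since $R_1' \le R' \le Z(R_1)$. The commutator map then induces a continuous, nondegenerate, alternating form $R_1/Z(R_1) \times R_1/Z(R_1) \to R_1'$ with finite target; by continuity and compactness any such form on a profinite group factors through a finite quotient, so nondegeneracy forces $R_1/Z(R_1)$ to be finite and hence $Z(R_1)$ to be open, and therefore infinite, in $R_1$. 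Along the chain $Z(R_1) \le R_1 \le R \le P_p \le C$, each term is characteristic in the next and $C \trianglelefteq G$, so $Z(R_1)$ is an infinite abelian pro-$p$ normal subgroup of $G$, as required. I expect the genuine obstacle to be this last step: producing an abelian subgroup that is at once infinite, pro-$p$ for a \emph{single} prime, and characteristic. Restricting to one Sylow subgroup is what keeps the construction pro-$p$, and the continuity of the commutator form is what excludes the infinite class-$2$ pro-$p$ groups with finite centre that would otherwise block the conclusion.
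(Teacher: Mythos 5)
Your proof is correct, so let me compare it with the paper: the paper gives no argument at all for this corollary, stating it as an immediate consequence of Corollary \ref{solcor}, so the actual extraction of an infinite abelian pro-$p$ normal subgroup is left to the reader. Your route uses exactly the ingredients the paper points to --- Theorem A for the splitting and the identity $\Con(\phi)\cap\phi(G)=\phi(\Con(\phi))$, which gives $|\Con(\phi):\phi(\Con(\phi))|=|G:\phi(G)|>1$ and hence that $\Con(\phi)$ is infinite; Theorem B(i) for pronilpotence and the Sylow decomposition; Corollary \ref{solcor} for solubility --- but your final step is more elementary and self-contained than the structure Corollary \ref{solcor} provides: you never invoke uniformity, torsion-freeness or finite exponent, only solubility, the closed derived series, and the key lemma that a profinite group $R$ with $\overline{[R,R]}$ finite has $Z(R)$ open. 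Your proof of that lemma is sound: the induced commutator form on $R/Z(R)$ is jointly continuous with finite discrete target, hence (by compactness) constant on cosets of $W\times W$ for some open normal $W$, and nondegeneracy forces $W=1$; this is precisely the profinite substitute for the (abstractly false) converse of Schur's theorem, and it is what excludes infinite extraspecial-like groups with finite centre, as you note. Two small repairs. First, the inclusion $R'\le Z(R_1)$ you assert is wrong in general: $R'$ need not be abelian, so it need not lie in $R_1=C_R(R')$ at all; what is true, and all you need, is $R_1'\le R'\cap R_1\le Z(R_1)$. Second, the detour through $R_1$ is in fact unnecessary: your factorization argument uses only well-definedness (central elements cancel in commutators, in any group), continuity, and nondegeneracy of the form --- never bilinearity or nilpotency class $2$ --- so it applies directly to $R$ itself, with target $\overline{[R,R]}$, giving $Z(R)$ open in $R$ and hence infinite, abelian, pro-$p$, and characteristic along the chain $Z(R)\le R\le P_p\le\Con(\phi)\unlhd G$. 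With these cosmetic changes the argument is complete.
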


Theorems A and B will be proved in somewhat greater generality, requiring some more technical definitions to state.  We will also prove some facts about open self-embeddings in the class of all profinite groups.  At this point it seems reasonable to ask the following:

\begin{que}Let $G$ be a finitely generated profinite group, and let $\phi$ be an open self-embedding of $G$.  Is $\Con(\phi)$ necessarily nilpotent?\end{que}

\section{Preliminaries and general remarks}

The following standard compactness argument will be used in several places.

\begin{lem}\label{obchain}Let $G$ be a compact topological group, and let $O$ be an open neighbourhood of $1$ in $G$.  Let $\mcK$ be a set of closed subgroups of $G$ such that $\bigcap \mcA \not\subseteq O$ for every finite subset $\mcA$ of $\mcK$.  Let $N=\bigcap \mcK$.  Then $N \not\subseteq O$; in particular, $N$ is non-trivial.\end{lem}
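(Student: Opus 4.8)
The plan is to recast the hypothesis as the finite intersection property for a suitable family of compact sets, and then to invoke compactness of $G$ directly; the group structure plays essentially no role. First I would set $C := G \setminus O$. Since $O$ is open, $C$ is closed in $G$, and as $G$ is compact, $C$ is itself compact. For each $K \in \mcK$ I define $C_K := K \cap C = K \setminus O$, which is closed, being the intersection of the two closed sets $K$ and $C$, and hence compact.

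Next I would translate the hypothesis into a statement about the family $\{C_K \mid K \in \mcK\}$. For a finite subset $\mcA \subseteq \mcK$ one has
\[ \bigcap_{K \in \mcA} C_K = \left(\bigcap_{K \in \mcA} K\right) \cap C = (\bigcap \mcA) \setminus O, \]
and this set is nonempty precisely because $\bigcap \mcA \not\subseteq O$ by assumption. Thus the family of closed subsets $\{C_K \mid K \in \mcK\}$ of the compact space $G$ has the finite intersection property. The key step is then the standard characterisation of compactness: any family of closed subsets of a compact space with the finite intersection property has nonempty total intersection. Applying this yields $\bigcap_{K \in \mcK} C_K \neq \emptyset$. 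Since $\bigcap_{K \in \mcK} C_K = \left(\bigcap \mcK\right) \cap C = N \setminus O$, I conclude $N \not\subseteq O$. Finally, as $O$ is a neighbourhood of $1$ we have $1 \in O$, so any point of $N \setminus O$ is distinct from $1$, giving that $N$ is non-trivial.

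I do not anticipate a genuine obstacle here: the entire content is the observation that the hypothesis is exactly the finite intersection property for the sets $K \setminus O$, after which compactness does all the work. The only points requiring a moment's care are that $C_K$ is closed (hence compact, so that the compactness argument applies) and the bookkeeping that a finite intersection of the $C_K$ equals $(\bigcap \mcA)\setminus O$; both are immediate.
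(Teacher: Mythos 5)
Your proposal is correct and is essentially identical to the paper's own proof: both define $C_K = K \cap (G \setminus O)$, observe that the hypothesis gives this family the finite intersection property, and apply compactness of $G$ to conclude $N \setminus O \neq \emptyset$. The only difference is that you spell out the routine verifications (closedness of $C_K$, the bookkeeping identity for finite intersections, and the final non-triviality remark) in more detail.
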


\begin{proof}Given $K \in \mcK$, let $C_K = K \cap (G \setminus O)$.  Then $\{C_K \mid K \in \mcK\}$ is a set of closed subsets of $G$ whose finite subsets have non-empty intersection.  Since $G$ is compact, it follows that the intersection $N \cap (G \setminus O)$ of all the sets $C_K$ is non-empty.  Hence $N \not\subseteq O$.\end{proof}

\begin{cor}\label{obcor}Let $G$ be a profinite group, and let $H$ be a closed subgroup of $G$.  Let $\mcK$ be a set of closed subgroups of $G$ such that $\overline{\langle \bigcap\mcA,H \rangle} = G$ for every finite subset $\mcA$ of $\mcK$.  Let $K=\bigcap \mcK$.  Then $\overline{\langle K, H \rangle}=G$.\end{cor}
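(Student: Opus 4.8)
The plan is to deduce this directly from Lemma~\ref{obchain} by choosing the open set $O$ appropriately, trading the ``escapes every neighbourhood of $1$'' conclusion there for the ``generates $G$ together with $H$'' conclusion wanted here. I would argue by contradiction, supposing that $\overline{\langle K, H \rangle}$ is a proper closed subgroup of $G$.

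First I would invoke the standard separation property of profinite groups: any proper closed subgroup is contained in a proper \emph{open} subgroup. Concretely, pick $g \in G \setminus \overline{\langle K, H\rangle}$; since $\overline{\langle K, H\rangle}$ is closed and equals the intersection of the sets $\overline{\langle K, H\rangle}U$ as $U$ ranges over the open normal subgroups of $G$, there is such a $U$ with $g \notin \overline{\langle K, H\rangle}U$. Then $M := \overline{\langle K, H\rangle}U$ is a proper open subgroup containing $\overline{\langle K, H\rangle}$. In particular $M$ is an open neighbourhood of $1$ that contains both $H$ and $K$.

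Next I would translate the hypothesis into the language of the lemma. For each finite subset $\mcA$ of $\mcK$ we are given $\overline{\langle \bigcap\mcA, H\rangle} = G$. If we had $\bigcap\mcA \subseteq M$, then, since $M$ is a closed subgroup containing $H$, we would get $\overline{\langle\bigcap\mcA, H\rangle} \subseteq M \ne G$, a contradiction. Hence $\bigcap\mcA \not\subseteq M$ for every finite $\mcA \subseteq \mcK$, which is exactly the hypothesis of Lemma~\ref{obchain} with $O = M$.

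Applying Lemma~\ref{obchain} then yields $K = \bigcap\mcK \not\subseteq M$, contradicting $K \subseteq M$ from the second step, and this completes the argument. I expect the only real content to be the first step, the passage from a proper closed subgroup to a proper open one containing $H$; the governing observation is that ``$\overline{\langle\bigcap\mcA,H\rangle}=G$'' says precisely that $\bigcap\mcA$ avoids every proper open subgroup containing $H$, which is the form in which the compactness argument of the lemma can be applied. Everything after that is formal.
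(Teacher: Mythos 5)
Your proposal is correct and follows essentially the same route as the paper: assume $\overline{\langle K,H\rangle}$ is proper, enclose it in a proper open subgroup $M$, and play Lemma~\ref{obchain} with $O=M$ against the hypothesis on finite intersections. The only cosmetic difference is that you apply the lemma in its forward direction to contradict $K\subseteq M$, while the paper applies its contrapositive to produce a finite $\mcA$ with $\bigcap\mcA\leq M$; these are the same argument.
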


\begin{proof}Let $L = \overline{\langle K, H \rangle}$, and suppose $L<G$.  Then $L \leq M$ for some proper open subgroup $M$ of $G$, since any closed subgroup of a profinite group is an intersection of open subgroups.  Then $H \leq M$, and it follows from Lemma \ref{obchain} that there is some finite subset $\mcA$ of $\mcK$ such that $\bigcap \mcA \leq M$.  But then $\overline{\langle \bigcap \mcA,H \rangle} \leq M < G$, a contradiction.\end{proof}

The following trivial observation will be used without further comment.

\begin{lem}\label{normend}Let $G$ be a profinite group, let $K \unlhd_c G$ and let $\phi$ be an endomorphism of $G$.  Then $\phi^{-1}(K) \unlhd_c G$.\end{lem}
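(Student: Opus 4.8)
The plan is to verify directly the three defining properties of a closed normal subgroup for the set $\phi^{-1}(K)$, treating the algebraic and topological parts separately; each reduces to a completely standard fact.

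First I would record that $\phi^{-1}(K)$ is a subgroup, which is the usual observation that the preimage of a subgroup under any group homomorphism is again a subgroup: if $x,y \in \phi^{-1}(K)$ then $\phi(xy^{-1}) = \phi(x)\phi(y)^{-1} \in K$, so $xy^{-1} \in \phi^{-1}(K)$. For normality, I would take $g \in G$ and $x \in \phi^{-1}(K)$ and apply $\phi$ to the conjugate, obtaining $\phi(gxg^{-1}) = \phi(g)\phi(x)\phi(g)^{-1}$; since $\phi(x) \in K$, $\phi(g) \in G$ and $K \unlhd G$, this conjugate lies in $K$, whence $gxg^{-1} \in \phi^{-1}(K)$.

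Finally, the closedness half follows at once from the standing convention of the paper that all endomorphisms are continuous: $\phi$ is a continuous map, $K$ is closed, and the preimage of a closed set under a continuous map is closed. I do not expect any genuine obstacle here — this is precisely why the statement is flagged as a trivial observation. The only point worth noting is that continuity of $\phi$ is exactly what delivers the topological part of the conclusion, while the subgroup and normality steps require no topology at all.
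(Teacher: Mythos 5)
Your proof is correct and is exactly the standard argument the paper has in mind: the paper states this lemma without proof, calling it a ``trivial observation,'' and your three steps (preimage of a subgroup is a subgroup, normality of $K$ in $G$ transfers through $\phi(g)\phi(x)\phi(g)^{-1} \in K$, and closedness from continuity of $\phi$) fill in precisely the intended routine verification. Nothing further is needed.
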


When considering the existence of open embeddings between profinite groups, many questions reduce to those about profinite groups involving finitely many primes, thanks to the following:

\begin{prop}\label{fewprimes}Let $G$ and $H$ be profinite groups, and suppose there is a continuous injective homomorphism $\phi$ from $G$ to an open subgroup $K$ of $H$.  Given a set of primes $\pi$, write $O^\pi(G)$ for the intersection of all closed normal subgroups $L$ of $G$ such that $G/L$ is a pro-$\pi$ group.  Then there is a finite set of primes $\pi$ such that for every set of primes $\pi^*$ containing $\pi$, there is an injective homomorphism from $G/O^{\pi^*}(G)$ to $H/O^{\pi^*}(H)$ whose image is $KO^{\pi^*}(H)/O^{\pi^*}(H)$.\end{prop}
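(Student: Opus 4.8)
The plan is to observe that $\phi$ always induces a continuous homomorphism between the maximal pro-$\pi^*$ quotients whose image is automatically the right thing, so that the whole content is injectivity, which I will force by choosing $\pi$ large enough. Throughout I will use the standard description of $O^\sigma(-)$, for a set of primes $\sigma$, as the closed subgroup $N$ topologically generated by the $\sigma'$-elements: $N$ is closed and normal (its generating set is conjugation-invariant); each $\sigma'$-element has trivial image in $H/O^\sigma(H)$, since in a pro-$\sigma$ group its order would be simultaneously a $\sigma$- and a $\sigma'$-number, so $N \le O^\sigma(H)$; and conversely a standard Sylow argument (if $q \in \sigma'$ divided $[H:M]$ for some open normal $M \ge N$, a pro-$q$ Sylow subgroup would meet $H \setminus M$, producing a $\sigma'$-element outside $M \supseteq N$, a contradiction) shows $H/N$ is pro-$\sigma$, whence $O^\sigma(H) \le N$.

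Granting this, the first thing I would record is that for \emph{any} $\pi^*$ the map $\phi$ carries $O^{\pi^*}(G)$ into $O^{\pi^*}(H)$: regarding $\phi$ as a topological isomorphism onto the open subgroup $K = \phi(G)$, we have $\phi(O^{\pi^*}(G)) = O^{\pi^*}(K)$, and since $K \le H$ every $(\pi^*)'$-element of $K$ is a $(\pi^*)'$-element of $H$, so $O^{\pi^*}(K) \le O^{\pi^*}(H)$. Hence $\phi$ induces a continuous homomorphism $\bar\phi \colon G/O^{\pi^*}(G) \to H/O^{\pi^*}(H)$ with image $\phi(G)\,O^{\pi^*}(H)/O^{\pi^*}(H) = K\,O^{\pi^*}(H)/O^{\pi^*}(H)$, exactly as required. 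So the entire statement reduces to the injectivity of $\bar\phi$, i.e. to $\phi^{-1}(O^{\pi^*}(H)) = O^{\pi^*}(G)$, which after transporting along the isomorphism $\phi$ becomes the equality $K \cap O^{\pi^*}(H) = O^{\pi^*}(K)$.

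The choice of $\pi$ is then dictated by this equality. Let $K_0 = \Core_H(K)$, which is open and normal in $H$ (as $K$ is open it has only finitely many conjugates, each open), and let $\pi$ be the finite set of primes dividing $[H:K_0]$. I would check that for every $\pi^* \supseteq \pi$ one has $O^{\pi^*}(H) \le K$: indeed $H/K_0$ is a finite $\pi^*$-group, so $K_0$ is a closed normal subgroup with pro-$\pi^*$ quotient and therefore $O^{\pi^*}(H) \le K_0 \le K$. Once $O^{\pi^*}(H) \le K$, the two groups $O^{\pi^*}(H)$ and $O^{\pi^*}(K)$ coincide, because every $(\pi^*)'$-element of $H$ already lies in $O^{\pi^*}(H) \le K$; thus the $(\pi^*)'$-elements of $H$ and of $K$ are the same set, and taking the closed subgroups they generate gives $O^{\pi^*}(H) = O^{\pi^*}(K)$ by the description above. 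Since $O^{\pi^*}(H) \le K$ also gives $K \cap O^{\pi^*}(H) = O^{\pi^*}(H)$, the required equality follows and $\bar\phi$ is injective.

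I expect the only real subtlety to be twofold: first, the reduction to the generation-by-$\sigma'$-elements description of $O^\sigma$ (where the Sylow step is the one piece of genuine content); and second, the recognition that it is normality, via passage to the core $K_0$, rather than divisibility of $[H:K]$ alone, that forces $O^{\pi^*}(H) \le K$ — a non-normal $K$ can perfectly well fail to contain $O^{\pi^*}(H)$ even when $\pi^*$ contains every prime dividing $[H:K]$, which is precisely why the core is needed. After these two points the remainder is a formal manipulation of maximal pro-$\pi^*$ quotients, and the continuity of $\bar\phi$ is automatic since it is the map induced on profinite quotients by the continuous homomorphism $\phi$.
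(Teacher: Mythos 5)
Your proof is correct, and its skeleton matches the paper's: both arguments take $\pi$ to be the set of primes dividing the index of the core of $K$ in $H$, reduce the entire proposition to the single equality $O^{\pi^*}(K) = O^{\pi^*}(H)$ (injectivity of the induced map is immediate from it, and well-definedness, continuity and the identification of the image are formal), and both exploit that $O^{\pi^*}(H) \le \Core_H(K)$ once $\pi^*$ contains $\pi$. Where you genuinely diverge is in how that key equality is established. The paper's Lemma \ref{lambdareslem} stays entirely at the level of quotients: for an open \emph{normal} subgroup $K_0$ with $\pi^*$ containing the primes of $|H:K_0|$, it gets $O^{\pi^*}(K_0) \le O^{\pi^*}(H)$ because $K_0/O^{\pi^*}(H)$ is a closed subgroup of a pro-$\pi^*$ group, and $O^{\pi^*}(H) \le O^{\pi^*}(K_0)$ because $H/O^{\pi^*}(K_0)$ is an extension of a pro-$\pi^*$ group by a pro-$\pi^*$ group; it then runs this argument twice (once inside $H$ and once inside $K$) through the core to conclude $O^{\pi^*}(H) = O^{\pi^*}(\Core_H(K)) = O^{\pi^*}(K)$. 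You instead prove and use the characterization of $O^{\sigma}(-)$ as the closed subgroup topologically generated by the $\sigma'$-elements --- whose nontrivial direction rests on profinite Sylow theory and supernatural orders --- and then observe that once $O^{\pi^*}(H) \le K$, the $(\pi^*)'$-elements of $H$ and of $K$ coincide as sets, so the subgroups they generate coincide. Your route buys the unconditional containment $O^{\pi^*}(K) \le O^{\pi^*}(H)$ for an arbitrary open (not necessarily normal) subgroup, and a one-step rather than two-step passage through the core; the cost is the heavier machinery, whereas the paper needs only the closure of the class of pro-$\pi^*$ groups under closed subgroups and extensions. Your closing remark --- that it is normality via the core, not divisibility of $|H:K|$ alone, that forces $O^{\pi^*}(H) \le K$ --- is exactly the point the paper's lemma encapsulates.
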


\begin{lem}\label{lambdareslem}Let $G$ be a profinite group, and let $H \leq_o G$.  Then there is a finite set of primes $\pi$ such that $O^{\pi^*}(H) = O^{\pi^*}(G)$, for any set of primes $\pi^*$ containing $\pi$.\end{lem}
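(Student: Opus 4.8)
The plan is to choose $\pi$ to be the finite set of primes dividing $[G:\Core_G(H)]$ and then to trap $O^{\pi^*}(H)$ between two subgroups that turn out to coincide. Write $C=\Core_G(H)$, the intersection of the (finitely many) conjugates of $H$. Since $H\leq_o G$, each conjugate is open of the same finite index, so $C$ is an open \emph{normal} subgroup of $G$; thus $m:=[G:C]$ is finite and $\pi:=\{p\mid m\}$ is a finite set of primes, which is the one I would fix.

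The first ingredient I would record is the monotonicity of $O^{\pi^*}$ under inclusion: if $A\leq B$ are closed subgroups of a profinite group, then $O^{\pi^*}(A)\leq O^{\pi^*}(B)$. Indeed, $A\cap O^{\pi^*}(B)\unlhd A$, and $A/(A\cap O^{\pi^*}(B))\cong A\,O^{\pi^*}(B)/O^{\pi^*}(B)$ is a closed subgroup of the pro-$\pi^*$ group $B/O^{\pi^*}(B)$, hence pro-$\pi^*$; therefore $O^{\pi^*}(A)\leq A\cap O^{\pi^*}(B)$. Applying this to the chain $C\leq H\leq G$ gives $O^{\pi^*}(C)\leq O^{\pi^*}(H)\leq O^{\pi^*}(G)$ for \emph{every} set of primes $\pi^*$.

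It then remains to prove the reverse inclusion $O^{\pi^*}(G)\leq O^{\pi^*}(C)$ whenever $\pi^*\supseteq\pi$, as this collapses the chain into equalities and in particular yields $O^{\pi^*}(H)=O^{\pi^*}(G)$. Here I would exploit that $C\unlhd G$: the subgroup $O^{\pi^*}(C)$ is characteristic in $C$ and hence normal in $G$, so I may pass to $\bar G:=G/O^{\pi^*}(C)$. By definition of $O^{\pi^*}(C)$ the image $\bar C=C/O^{\pi^*}(C)$ is pro-$\pi^*$, it is normal in $\bar G$, and $\bar G/\bar C\cong G/C$ is finite of order $m$, a $\pi^*$-number since $\pi^*\supseteq\pi$. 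An extension of a pro-$\pi^*$ group by a finite $\pi^*$-group is again pro-$\pi^*$ (each finite continuous quotient of $\bar G$ is an extension of a quotient of $\bar C$ by a quotient of $\bar G/\bar C$, so has $\pi^*$-order), whence $\bar G$ is pro-$\pi^*$. The minimality built into the definition of $O^{\pi^*}(G)$ then forces $O^{\pi^*}(G)\leq O^{\pi^*}(C)$, completing the argument.

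The only point needing care, rather than any genuine obstacle, is the reduction to a \emph{normal} subgroup. Since $H$ need not be normal in $G$, the subgroup $O^{\pi^*}(H)$ need not be normal in $G$, so one cannot compare it with $O^{\pi^*}(G)$ directly through a quotient of $G$; it is precisely the passage to the core $C$ that repairs this, the only cost being the enlargement of the controlling prime set from the divisors of $[G:H]$ to those of $[G:C]$, which remains finite.
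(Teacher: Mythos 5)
Your proof is correct and takes essentially the same route as the paper: both fix $\pi$ to be the primes dividing $|G:\Core_G(H)|$, reduce to the open normal core $C = \Core_G(H)$, and close the argument with the key fact that an extension of a pro-$\pi^*$ group by a finite $\pi^*$-group is again pro-$\pi^*$. The only organizational difference is that you obtain the easy inclusions $O^{\pi^*}(C) \leq O^{\pi^*}(H) \leq O^{\pi^*}(G)$ from a prime-free monotonicity lemma (via the second isomorphism theorem) and then sandwich, whereas the paper proves the equality $O^{\pi^*}(K) = O^{\pi^*}(G)$ for open normal $K$ and applies it twice, once inside $G$ and once inside $H$.
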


\begin{proof}Let $K \unlhd_o G$, let $\pi$ be the set of primes dividing $|G:K|$ and let $\pi^* \supseteq \pi$.  Then $O^{\pi^*}(G) \le K$ since $G/K$ is a $\pi$-group and hence a $\pi^*$-group.  Moreover $O^{\pi^*}(K) \le O^{\pi^*}(G)$ since $K/O^{\pi^*}(G)$ is contained in $G/O^{\pi^*}(G)$ and is therefore pro-$\pi^*$.  On the other hand $G/O^{\pi^*}(K)$ is pro-$\pi^*$, being the extension of a pro-$\pi^*$ group by another pro-$\pi^*$ group, so in fact $O^{\pi^*}(K) = O^{\pi^*}(G)$.

We apply this argument inside $G$ and inside $H$ to obtain $O^{\pi^*}(G) = O^{\pi^*}(K) = O^{\pi^*}(H)$ where $K$ is the core of $H$ in $G$, so the required finite set of primes $\pi$ is given by the prime divisors of $|G:\Core_G(H)|$.\end{proof}

\begin{proof}[Proof of Proposition \ref{fewprimes}]By Lemma \ref{lambdareslem}, there is a finite set of primes $\pi$ such that $O^{\pi^*}(\phi(G)) = O^{\pi^*}(H)$, for any set of primes $\pi^*$ containing $\pi$.  It follows that there is a well-defined and unique homomorphism $\psi$ from $G/O^{\pi^*}(G)$ to $H/O^{\pi^*}(H)$ such that
\[ \psi(xO^{\pi^*}(G))= \phi(x) O^{\pi^*}(H) \]
for all $x \in G$.  We also have $\psi(G/O^{\pi^*}(G)) = \phi(G) O^{\pi^*}(H) \leq_o H$, and
\[ \ker(\psi) = \{ xO^{\pi^*}(G) \mid x \in G, \phi(x) \in O^{\pi^*}(\phi(G))\} = 1.\]
Hence $\psi$ is injective, as required.\end{proof}

Preimages of endomorphisms are well-behaved with respect to open subgroups.

\begin{lem}\label{shrinkind}Let $G$ be a profinite group, let $K \le_o G$ and let $\phi$ be an endomorphism of $G$.  Then $|G:\phi^{-1}(K)| \le |G:K|$.  If $|G:\phi^{-1}(K)| = |G:K|$ then $G = \phi(G)K$.\end{lem}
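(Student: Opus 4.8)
The plan is to compare the two coset spaces directly by constructing a single explicit map between them, avoiding any appeal to quotient homomorphisms (which is forced on us because $K$ need not be normal in $G$). First I would note that since $\phi$ is continuous and $K$ is open, the preimage $\phi^{-1}(K)$ is open in $G$, so both indices appearing in the statement are finite and it makes sense to compare them at all.

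The core observation is that for $x,y \in G$ we have $x^{-1}y \in \phi^{-1}(K)$ if and only if $\phi(x)^{-1}\phi(y) \in K$; that is, $x$ and $y$ lie in the same left coset of $\phi^{-1}(K)$ precisely when $\phi(x)$ and $\phi(y)$ lie in the same left coset of $K$. I would use this to define a map $\bar\phi$ from the set of left cosets of $\phi^{-1}(K)$ to the set of left cosets of $K$ by sending $x\phi^{-1}(K)$ to $\phi(x)K$. The equivalence just stated shows in one direction that $\bar\phi$ is well defined, and in the other direction that it is injective. An injection between finite sets then immediately yields $|G:\phi^{-1}(K)| \le |G:K|$.

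For the equality clause, I would identify the image of $\bar\phi$ explicitly: it consists of exactly those cosets $gK$ with $g \in \phi(G)K$, since $\bar\phi$ hits $gK$ if and only if some $\phi(x)$ lies in $gK$, i.e.\ if and only if $gK \cap \phi(G) \neq \emptyset$. Hence $\bar\phi$ is surjective if and only if $\phi(G)K = G$. Because both coset spaces are finite, an injection $\bar\phi$ is a bijection exactly when the two indices agree; combining this with the previous sentence shows that $|G:\phi^{-1}(K)| = |G:K|$ forces $G = \phi(G)K$, as required.

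The argument is essentially formal, so there is no serious obstacle. The only point requiring genuine care is that $K$ (and hence $\phi^{-1}(K)$) need not be normal, so $\bar\phi$ must be handled purely as a map of coset spaces rather than as a group homomorphism, with well-definedness verified at the level of left cosets; one must also keep the left/right conventions consistent throughout so that the equivalence governing $\bar\phi$ is applied correctly.
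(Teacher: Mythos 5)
Your proof is correct and takes essentially the same approach as the paper: both rest on the observation that $\phi$ induces a well-defined injection of left coset spaces, so the indices compare, and equality forces surjectivity, i.e.\ $G = \phi(G)K$. The paper merely organises this in two steps (factoring through the cosets of $K \cap \phi(G)$ in $\phi(G)$ and phrasing injectivity as a pigeonhole contradiction), whereas you compose them into a single map $x\phi^{-1}(K) \mapsto \phi(x)K$; the content is the same.
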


\begin{proof}Let $L = \phi^{-1}(K)$.  Note that $|\phi(G):K \cap \phi(G)| \le |G:K|$, with equality if and only if $\phi(G)$ contains a transversal of $K$ in $G$, that is, if and only if $G = \phi(G)K$. and that $\phi$ induces a surjective map from left cosets of $L$ in $G$ to left cosets of $K \cap \phi(G)$ in $\phi(G)$.  Suppose $|G:L|>|\phi(G):K \cap \phi(G)|$.  Then by the pigeon-hole principle, there are distinct cosets $xL$ and $yL$ of $L$ in $G$ such that \; $\phi(x)(K \cap \phi(G)) =\phi(y)(K \cap \phi(G))$.  But then $\phi(xy^{-1}) = \phi(x)\phi(y^{-1}) \in K$, so in fact $xy^{-1} \in L$, a contradiction.  Hence $|G:L| \le |\phi(G):K \cap \phi(G)| \le |G:K|$, and if $|G:L|=|G:K|$ then $G = \phi(G)K$.\end{proof}

To determine whether every open subgroup of a profinite group $G$ contains an open subgroup isomorphic to $G$, it suffices to consider maximal open normal subgroups.

\begin{prop}Let $G$ be a profinite group and let $H$ be an open subgroup of $G$.  Suppose that $\Hom_o(G,H) = \emptyset$.  Then $\Hom_o(G,K) = \emptyset$ for some open normal subgroup $K$ of $G$ such that $G/K$ is simple.\end{prop}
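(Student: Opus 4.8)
The plan is to reduce to the case where $H$ is normal and then run an induction on the index $|G:H|$, the engine being composition of open self-embeddings. First I would replace $H$ by its core $\Core_G(H)$: this is open and normal in $G$, and since $\Core_G(H) \le H$ is open in $H$, every map in $\Hom_o(G,\Core_G(H))$ also lies in $\Hom_o(G,H)$, whence $\Hom_o(G,\Core_G(H)) = \emptyset$. So it is enough to produce the required $K$ under the weaker assumption that there is \emph{some} open normal subgroup $N_0 \unlhd G$ with $\Hom_o(G,N_0) = \emptyset$.

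The heart of the matter is the following claim, which I would prove by induction on $|G:N|$: \emph{if} $\Hom_o(G,K) \ne \emptyset$ for every open normal $K$ with $G/K$ simple, \emph{then} $\Hom_o(G,N) \ne \emptyset$ for every open normal $N \unlhd G$. The base case $N = G$ is immediate, as the identity lies in $\Hom_o(G,G)$. For the inductive step, given a proper open normal $N$, I would choose a maximal open normal $K$ with $N \le K \lhd G$ and $G/K$ simple (pulling back a maximal normal subgroup of the finite group $G/N$). By hypothesis there is some $\phi \in \Hom_o(G,K)$. Setting $N' = \phi^{-1}(N)$, which is open normal by Lemma~\ref{normend}, I would observe that $\phi(G) \le K$ and $N \le K$ force $\phi(G)N \le K < G$, so $G \ne \phi(G)N$, and Lemma~\ref{shrinkind} then yields the strict drop $|G:N'| < |G:N|$. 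The induction hypothesis supplies $\psi \in \Hom_o(G,N')$, and the composite $\phi \circ \psi$ is an open self-embedding whose image satisfies $\phi(\psi(G)) \le \phi(N') = N \cap \phi(G) \le N$, so $\phi \circ \psi \in \Hom_o(G,N)$.

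Applied contrapositively to $N = N_0$, this claim finishes the proposition: since $\Hom_o(G,N_0) = \emptyset$, it cannot be that every open normal $K$ with $G/K$ simple has $\Hom_o(G,K) \ne \emptyset$, so at least one such $K$ satisfies $\Hom_o(G,K) = \emptyset$, as desired.

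The step I expect to be the crux is the strict inequality $|G:\phi^{-1}(N)| < |G:N|$, since without strictness the induction does not terminate; the point is exactly that $\phi(G)N$ is trapped inside the proper subgroup $K$, which is precisely the equality-case condition $G = \phi(G)N$ ruled out by Lemma~\ref{shrinkind}. The remaining care-points are routine: checking that $\phi \circ \psi$ really has open image (using that $\phi$ restricts to a homeomorphism onto the open subgroup $\phi(G)$, so that the open subgroup $\psi(G)$ maps to an open subgroup of $G$ contained in $N$), and confirming that the passage to $\Core_G(H)$ preserves emptiness of $\Hom_o$.
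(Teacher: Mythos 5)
Your proof is correct and is essentially the paper's own argument: reduce to a normal subgroup via the core, exploit the strict index drop $|G:\phi^{-1}(N)| < |G:N|$ from Lemma~\ref{shrinkind} (using that $\phi(G)N$ is trapped in $K$), and compose two embeddings to derive the needed map. The only difference is presentational---you run a strong induction on the index stated contrapositively, where the paper picks a counterexample of minimal index and derives a contradiction---and these are the same proof in different clothing.
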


\begin{proof}By replacing $H$ by its core in $G$, we may assume $H$ is normal in $G$; moreover, we may assume $|G:H|$ realises the minimum value of the set
\[ \{ |G:N| \mid N \unlhd_o G, \Hom_o(G,N)=\emptyset\}.\]
Clearly $H < G$, so there is a normal subgroup $K$ containing $H$ such that $G/K$ is simple.  Suppose there exists $\phi \in \Hom_o(G,K)$; let $L = \phi^{-1}(H)$.  Then $\phi(G)H \le K < G$, so $|G:L| < |G:H|$ by Lemma \ref{shrinkind}.  By the minimality of $|G:H|$, there must be some $\psi \in \Hom_o(G,L)$.  But then $\phi|_L\psi \in \Hom_o(G,H)$, a contradiction.  Hence $\Hom_o(G,K)=\emptyset$.\end{proof}

As an extreme case, one could consider profinite groups $G$ in which \emph{every} open subgroup $H$ of $G$ is isomorphic to $G$, or at least there is some (open) normal subgroup of $G$ contained in $H$ that is isomorphic to $G$.  However, there are no examples of this behaviour for groups of type $\tF$ aside from the obvious ones.

\begin{prop}\label{manyiso}Let $G$ be a profinite group of type $\tF$.  Then the following are equivalent:
\begin{enumerate}[(i)]
\item Every open subgroup of $G$ contains a normal subgroup $N$ of $G$ such that $N \cong G$.
\item $G$ is of the form $\prod_p \bZ^{n_p}_p$ for non-negative integers $n_p$, where $p$ ranges over a set of prime numbers.  In particular, every open subgroup of $G$ is isomorphic to $G$.\end{enumerate}\end{prop}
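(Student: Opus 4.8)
The implication $(ii)\Rightarrow(i)$ I would dispose of first, as it is routine. If $G=\prod_p\bZ_p^{n_p}$ and $H\leq_o G$ has index $m$, then $G$ is abelian and so $g^m\in H$ for every $g\in G$; hence the closed characteristic subgroup $G^m=\prod_p (p^{v_p(m)}\bZ_p)^{n_p}$ is contained in $H$. Since $p^{k}\bZ_p\cong\bZ_p$ this subgroup is isomorphic to $G$, and being abelian and of finite index it is normal and open. Thus every open subgroup even contains an \emph{open} normal copy of $G$.

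For $(i)\Rightarrow(ii)$ the plan is to prove in turn that $G$ is pronilpotent, to reduce to the Sylow subgroups, and then to settle the pro-$p$ case. The reduction is the clean part: once $G=\prod_p P_p$ is known to be pronilpotent, condition $(i)$ descends to each $P_p$. Indeed, given $H_p\leq_o P_p$, apply $(i)$ to the open subgroup $H_p\times\prod_{q\neq p}P_q$ to get $N\unlhd G$ with $N\cong G$ inside it; then $N\cap P_p$ is the (normal) Sylow $p$-subgroup of $N$, so $N\cap P_p\unlhd P_p$, $N\cap P_p\cong P_p$, and $N\cap P_p\leq H_p$. Since a pro-$p$ group of type $\tF$ is exactly a finitely generated pro-$p$ group, it therefore suffices to prove that a finitely generated pro-$p$ group $P$ satisfying $(i)$ is isomorphic to some $\bZ_p^{n}$.

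To obtain pronilpotency I would feed $(i)$ into Theorem B. Applying $(i)$ to the members of a base of neighbourhoods of $1$ built from open characteristic subgroups produces normal copies $N\cong G$ that are cofinal among neighbourhoods of $1$, and each isomorphism $G\xrightarrow{\sim}N\hookrightarrow G$ is an injective endomorphism. The first lemma to establish is that these copies may be taken open, so that these endomorphisms lie in $\End_o(G)$; granting this, their images form a base of neighbourhoods of $1$ and Theorem B(ii) gives that $G$ is pronilpotent. After reducing to the pro-$p$ case, I would then apply Corollary \ref{solcor} to such an open self-embedding $\phi$: when $\bigcap_k\phi^k(P)=1$ we have $\Con(\phi)=P$ by Theorem A, and the corollary forces $P$ to be open in a product of finitely many uniform pro-$p$ groups. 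In particular $P$ is $p$-adic analytic, torsion-free and nilpotent, and crucially for what follows, a closed subgroup of $P$ isomorphic to $P$ then has full dimension and is therefore automatically open.

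It remains to upgrade \emph{nilpotent} to \emph{abelian}, and this is the step I expect to be the main obstacle, since it is here that one must exploit the full strength of $(i)$ rather than a single self-embedding. The idea is that if $N\unlhd P$ with $N\cong P$ and $N\leq H$, then $\overline{[N,N]}\cong\overline{[P,P]}$ is normal in $P$ and contained in $\overline{[H,H]}$; letting $H$ run through a base shows that $\overline{[P,P]}$ admits normal-in-$P$ copies of itself lying in arbitrarily small neighbourhoods of $1$. For a nilpotent $p$-adic analytic $P$ this should force $\overline{[P,P]}=1$ by induction on the nilpotency class (or Hirsch length), the point being precisely that a proper full-dimensional normal subgroup is open, so arbitrarily small self-copies of a nontrivial term of the lower central series cannot exist. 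Once $P$ is abelian it is a finitely generated abelian pro-$p$ group, say $\bZ_p^{n}\times F$ with $F$ finite; if $F\neq1$ then $F$ is the torsion subgroup and, being characteristic, is contained in every copy $N\cong P$, contradicting $(i)$ for an open $H$ with $F\not\leq H$. Hence $F=1$ and $P\cong\bZ_p^{n}$. The two points I expect to require genuine care are this abelian step and the preliminary claim that the normal copies supplied by $(i)$ are open; the latter is transparent once analyticity is available, so some bootstrapping will be needed to secure openness (or the analytic structure) before the contraction machinery of Corollary \ref{solcor} is invoked.
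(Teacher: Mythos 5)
Your route has a genuine circularity that you flag but never resolve, and it is fatal to the plan of going through Theorem B and Corollary \ref{solcor}. Condition (i) supplies \emph{closed} normal copies $N \cong G$, while everything in your argument requires the associated embeddings $G \to N \le G$ to lie in $\End_o(G)$, i.e.\ to have open image. Your proposed fix (openness is ``transparent once analyticity is available'') is circular: the analytic structure is to come from Corollary \ref{solcor}, which needs an open self-embedding in the first place. Worse, the Proposition covers infinitely generated $\tF$-groups, for which no analytic structure will ever exist, and there the openness claim is genuinely false for closed self-copies: in $G=\prod_p \bZ_p$ the subgroup $\prod_p p\bZ_p$ is a closed normal copy of $G$ that is not open. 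So openness cannot be secured before pronilpotency is known, yet pronilpotency was to be deduced from Theorem B(ii), which needs openness. A further unaddressed point: even granting an open self-embedding $\phi$ of the pro-$p$ group $P$, Corollary \ref{solcor} describes $\Con(\phi)$, and to conclude $\Con(\phi)=P$ you need $\phi_+(P)=\bigcap_k \phi^k(P)=1$, which a copy furnished by (i) need not satisfy.

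The abelian step is also incorrect as sketched. You claim arbitrarily small normal-in-$P$ self-copies of a nontrivial $\overline{[P,P]}$ cannot exist; they can. Take $P$ to be the Heisenberg group of upper unitriangular $3\times 3$ matrices over $\bZ_p$: then $\overline{[P,P]}=Z(P)\cong\bZ_p$, and for every $k$ the closure of $\langle z^{p^k}\rangle$ ($z$ a generator of the centre) is a central, hence normal, copy of $\overline{[P,P]}$ inside an arbitrarily small neighbourhood of $1$. What actually fails for this $P$ is the existence of small normal copies of the \emph{whole} group (one can check that the only normal subgroup of $P$ isomorphic to $P$ is $P$ itself); the obstruction lives in the interaction between normality of $N$ in $P$ and the isomorphism $N\cong P$, which your lower-central-series induction never uses. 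This interaction is precisely what the paper's proof exploits, with none of the machinery you invoke: fix $H=\Ilhd_n(G)$, put $k=|\Aut(G/H)|$, and take by (i) a normal copy $\phi(G)\le \Ilhd_k(G)$. Since $\phi(H)$ is characteristic in $\phi(G)$, the group $G$ acts by conjugation on the finite group $\phi(G)/\phi(H)$, so $G/C_G(\phi(G)/\phi(H))$ embeds in $\Aut(\phi(G)/\phi(H))\cong\Aut(G/H)$; hence the centraliser is an open normal subgroup of index at most $k$ and so contains $\Ilhd_k(G)\ge\phi(G)$. Thus $\phi(G)$ centralises $\phi(G)/\phi(H)$, so $\phi(G)/\phi(H)\cong G/H$ is abelian; letting $n$ vary over all integers shows $G$ is abelian, after which the Sylow decomposition and your torsion-elimination step finish. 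Your direction (ii)$\Rightarrow$(i) is fine; it is the core of (i)$\Rightarrow$(ii) that needs to be replaced by an argument of this kind.
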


\begin{proof}It is clear that (ii) implies (i).  Suppose (i) holds; let $\mcC = \{\Ilhd_n(G) \mid n \in \bN\}$.

We claim first that $G$ is abelian.  Since $\mcC$ forms a base of neighbourhoods of the identity, it suffices to show that $G/H$ is abelian for all $H \in \mcC$.  Fix $H$; let $k = |\Aut(G/H)|$ and let $K = \Ilhd_k(G)$.  Then by our assumption, there is an injective endomorphism $\phi$ of $G$ such that $\phi(G)$ is normal in $G$ and $\phi(G) \le K$.  Since $\phi(H)$ is a characteristic subgroup of $\phi(G)$, we see that $G$ acts on $\phi(G)/\phi(H)$ by conjugation as a subgroup of $\Aut(\phi(G)/\phi(H)) \cong \Aut(G/H)$, so $|G:C_G(\phi(G)/\phi(H))| \le K$, and hence $K \le C_G(\phi(G)/\phi(H))$.  In particular, $\phi(G)$ acts trivially on $\phi(G)/\phi(H)$, so $\phi(G)/\phi(H)$ is abelian, and thus $G/H$ is abelian.

Thus $G$ is a Cartesian product of its Sylow subgroups $S_p$, each of which is abelian of type $\tF$.  This forces $S_p = F_p \times \bZ^{n_p}_p$ where $F_p$ is finite; in fact $F_p$ must be trivial for (i) to hold, by considering the $p$-torsion subgroups of open subgroups of $G$.  Thus $G$ is of the form described in (ii).\end{proof}

\section{Well-behaved groups and endomorphisms}

We now define a somewhat technical property of a set of endomorphisms acting on a profinite group.  This property holds for all endomorphisms of $\tF$-groups, but is additionally retained on passage to suitable invariant subgroups.

\begin{defn}\label{regdef}Let $G$ be a profinite group and let $\Lambda$ be a set of endomorphisms of $G$.  Say a subgroup $H$ of $G$ is \emph{$\Lambda$-invariant} if $\lambda(H) \le H$ for all $\lambda \in \Lambda$.  Say $\Lambda$ is \emph{stable} on $G$ if the set of all $\Lambda$-invariant open subgroups of $G$ is a base of neighbourhoods of the identity.

Given a set $\Omega$ of automorphisms of $G$, let $I^\Omega_n(G)$ be the intersection of all $\Omega$-invariant open normal subgroups of $G$ of index at most $n$.  Say $\Omega$ \emph{regulates} $\Lambda$ (on $G$) if the following conditions are satisfied:

\begin{enumerate}[(a)]
\item $\lambda\Omega = \Omega\lambda$ for all $\lambda \in \Lambda$.
\item $I^\Omega_n(G)$ is open in $G$ for all $n$.
\item The set $\{I^\Omega_n(G) \mid n \in \bN\}$ is a base of neighbourhoods of the identity.\end{enumerate}

Say that $\Lambda$ is \emph{regulated} on $G$ if a set of $\Omega$ of automorphisms exists such that $\Omega$ regulates $\Lambda$.  Note that subsets of regulated sets are regulated.\end{defn}

If $G$ is of type $\tF$ then the set of all endomorphisms is regulated (for instance by the empty set).

\begin{prop}\label{tfrelstab}Let $G$ be a profinite group and let $\Lambda$ be a set of endomorphisms of $G$.  Suppose that $\Lambda$ is regulated by $\Omega \subseteq \Aut(G)$ on $G$.
\begin{enumerate}[(i)]
\item If $K$ is an $\Omega$-invariant open normal subgroup of $G$ and $\lambda \in \Lambda$, then $\lambda^{-1}(K)$ is an $\Omega$-invariant open normal subgroup of $G$.  As a result, $I^\Omega_n(G)$ is $\Lambda$-invariant for all $n$, so $\Lambda$ is stable on $G$.
\item Suppose that $G$ is of the form $N \rtimes H$ where both $N$ and $H$ are $\Lambda \cup \Omega$-invariant, and where every element of $\Lambda$ induces a surjective map on $H$.  Let $\Psi$ be the set of automorphisms of $N$ induced by the conjugation action of $H$.  Then $\Xi = \Omega \cup \Psi$ regulates $\Lambda$ acting on $N$.\end{enumerate}\end{prop}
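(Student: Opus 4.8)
I would treat the two parts separately. Part (i) is a direct computation using condition (a) together with Lemmas \ref{normend} and \ref{shrinkind}; part (ii) needs the structural observation that a $\Xi$-invariant open normal subgroup of $N$ is automatically normal in the whole of $G$, and the verification of the index bounds (b) and (c) is where the real work lies.

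For part (i), given $K$ an $\Omega$-invariant open normal subgroup of $G$ and $\lambda \in \Lambda$, the subgroup $\lambda^{-1}(K)$ is closed and normal by Lemma \ref{normend}, and open since $\lambda$ is continuous and $K$ is open. To see it is $\Omega$-invariant I would use condition (a): for $\omega \in \Omega$ there is $\omega' \in \Omega$ with $\lambda\omega = \omega'\lambda$, so if $\lambda(x) \in K$ then $\lambda(\omega(x)) = \omega'(\lambda(x)) \in \omega'(K) \le K$, giving $\omega(\lambda^{-1}(K)) \le \lambda^{-1}(K)$. For the consequence, Lemma \ref{shrinkind} shows $|G:\lambda^{-1}(K)| \le |G:K|$, so as $K$ ranges over the $\Omega$-invariant open normal subgroups of index at most $n$, so does $\lambda^{-1}(K)$; hence $I^\Omega_n(G) \le \lambda^{-1}(K)$, i.e. $\lambda(I^\Omega_n(G)) \le K$, and intersecting over all such $K$ yields $\lambda(I^\Omega_n(G)) \le I^\Omega_n(G)$. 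Since the $I^\Omega_n(G)$ are open by (b) and form a base by (c), stability of $\Lambda$ follows at once.

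For part (ii) I would first record the elementary facts. Each $\omega \in \Omega$ is an automorphism with $\omega(N) \le N$ and $\omega(H) \le H$, so the modular law applied to $N = N \cap \omega(N)\omega(H) = \omega(N)\,(N \cap \omega(H))$, together with $N \cap \omega(H) \le N \cap H = 1$, forces $\omega(N) = N$, and symmetrically $\omega(H) = H$; thus $\Xi$ really does consist of automorphisms of $N$. Condition (a) for $\Xi$ then splits into an $\Omega$-part, obtained by restricting $\lambda\Omega = \Omega\lambda$ to $N$, and a $\Psi$-part: for $h \in H$ and $x \in N$ one has $\lambda(hxh^{-1}) = \lambda(h)\lambda(x)\lambda(h)^{-1}$ with $\lambda(h) \in H$, so $(\lambda|_N)(c_h|_N) = (c_{\lambda(h)}|_N)(\lambda|_N)$; the reverse inclusion is exactly where the hypothesis that $\lambda$ is surjective on $H$ is needed, since it lets me solve $\lambda(h') = h$. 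Combining the two parts gives $(\lambda|_N)\Xi = \Xi(\lambda|_N)$.

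The main obstacle is verifying (b), namely that $I^\Xi_n(N)$ is open. The decisive observation is that any $\Xi$-invariant open normal subgroup $U$ of $N$ is in fact normal in $G$: normality in $N$ handles conjugation by $N$, while $\Psi$-invariance (with $\Psi$ closed under inverses, as $H$ is a group) gives $hUh^{-1} = U$ for all $h \in H$, so $U$ is invariant under all of $G = NH$. To obtain a \emph{uniform} index bound I would then pass to $UH$: this is an $\Omega$-invariant open subgroup of $G$ with $UH \cap N = U$ and $|G:UH| = |N:U| \le n$ (by the modular law and $G = N\cdot UH$), and its core $\Core_G(UH)$ is an $\Omega$-invariant open normal subgroup of $G$ of index at most $n!$ with $\Core_G(UH) \cap N \le U$. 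Hence $I^\Omega_{n!}(G) \le \Core_G(UH)$, so $N \cap I^\Omega_{n!}(G) \le U$ for every such $U$, and therefore $N \cap I^\Omega_{n!}(G) \le I^\Xi_n(N)$. As $I^\Omega_{n!}(G)$ is open in $G$ by (b) for $\Omega$, the subgroup $N \cap I^\Omega_{n!}(G)$ is open in $N$, and a subgroup containing an open subgroup is open, giving (b). Finally, since $M \cap N$ is a $\Xi$-invariant open normal subgroup of $N$ of index at most $|G:M|$ whenever $M$ is $\Omega$-invariant open normal in $G$, one also has $I^\Xi_n(N) \le N \cap I^\Omega_n(G)$; the resulting sandwich, combined with (c) for $\Omega$, shows the $I^\Xi_n(N)$ shrink to $1$ and hence form a base, establishing (c). The only genuinely delicate point is the passage from bounded index in $N$ to bounded index in $G$, which the $UH$-and-core device resolves; the rest is bookkeeping.
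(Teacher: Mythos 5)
Your proof is correct and follows essentially the same route as the paper's: part (i) via the commutation relation $\lambda\omega=\omega'\lambda$ and Lemma \ref{shrinkind}, and part (ii) via the same key device of passing from a $\Xi$-invariant $U\unlhd_o N$ to the $\Omega$-invariant core of $UH$ in $G$ to get the uniform bound $I^\Xi_n(N)\ge I^\Omega_{n!}(G)\cap N$, then sandwiching $I^\Xi_n(N)\le N\cap I^\Omega_n(G)$ for (c). Your extra verification that $\omega(N)=N$ and $\omega(H)=H$ via the modular law is a small point of care the paper leaves implicit, but it does not change the argument.
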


\begin{proof}(i) Let $\lambda \in \Lambda$ and $\omega \in \Omega$.  Then $\lambda\omega = \omega'\lambda$ for some $\omega' \in \Omega$, so
\[ \lambda\omega(\lambda^{-1}(K)) = \omega'\lambda(\lambda^{-1}(K)) = \omega'(K \cap \lambda(G)) \le \omega'(K) = K\]
and thus $\omega(\lambda^{-1}(K)) \le \lambda^{-1}(K)$, so $\lambda^{-1}(K)$ is $\Omega$-invariant.  We have $|G:\lambda^{-1}(K)| \le |G:K|$ by Lemma \ref{shrinkind}.  It follows that $\lambda^{-1}(I^\Omega_n(G))$ is an intersection of $\Omega$-invariant open normal subgroups of $G$ of index at most $n$, so $\lambda^{-1}(I^\Omega_n(G)) \ge I^\Omega_n(G)$, that is $\lambda(I^\Omega_n(G)) \le I^\Omega_n(G)$.

(ii) We must check that each of the conditions (a)--(c) are satisfied by $\Xi$.

Let $\lambda \in \Lambda$.  We have $\lambda\Omega = \Omega\lambda$ by assumption.  Suppose $h \in H$ induces $\psi_h \in \Psi$ by conjugation.  Then for all $x \in N$:
\[ \lambda\psi_h(x) = \lambda(hxh^{-1}) = \lambda(h)\lambda(x)\lambda(h)^{-1} = \psi_{\lambda(h)}\lambda(x),\]
so $\lambda\psi_h = \psi_{\lambda(h)}\lambda$.  Similarly $\psi_h\lambda = \lambda\psi_k$, where $k$ is an element of $H$ such that $\lambda(k) = h$.  This proves condition (a).

Let $K$ be a $\Xi$-invariant open normal subgroup of $N$ of index at most $n$.  Then $K$ is normalised by $H$, so $K \unlhd G$, and $KH$ is an open $\Omega$-invariant subgroup of $G$, so the core of $KH$ in $G$ is an open normal $\Omega$-invariant subgroup of $G$.  Thus $KH \ge I^\Omega_{n!}(G)$, which ensures that $K$ contains $I^\Omega_{n!}(G) \cap N$.  Thus $I^\Xi_n(N)$ contains the open subgroup $I^\Omega_{n!}(G) \cap N$ of $N$.  Moreover, we see that $I^\Xi_{t(n)}(N) \le I^\Omega_n(G) \cap N$ for all $n \in \bN$, where $t(n) = |G:I^\Omega_n(G)|$, so the set $\{I^\Xi_n(N) \mid n \in \bN\}$ has trivial intersection.  This proves conditions (b) and (c).\end{proof}

We can also generalise the hypotheses of Theorem A by considering contraction for more general semigroups of endomorphisms, in place of the cyclic monoid $\{\phi^k \mid k \ge 0\}$.

\begin{defn}Let $G$ be a profinite group and let $\Lambda$ be a semigroup of endomorphisms of $G$ acting on the left.  Write $\Lambda_\cap(G)$ for $\bigcap_{\lambda \in \Lambda} \lambda(G)$.  Let $\mcF$ consist of all subsets of $\Lambda$ which contain $\bigcap_{\xi \in \Xi} \Lambda\xi$ for some finite subset $\Xi$ of $\Lambda$.   Given a closed subgroup $K$ of $G$, let $\Con(\Lambda,K)$ be the set of elements $x \in G$ such that for all open sets $O \supseteq K$ there is some $\Sigma \in \mcF$ such that $\sigma x \in O$ for all $\sigma \in \Sigma$; define $\Con(\Lambda) = \Con(\Lambda,1)$.   (One can regard $\Con(\Lambda)$ as the set on which $\Lambda$ converges pointwise to the zero endomorphism.)  For these definitions to be useful, we are particularly interested in those semigroups $\Lambda$ which satisfy following conditions:

(I) For every finite subset $\Xi$ of $\Lambda$, the intersection $\bigcap_{\xi \in \Xi} \Lambda\xi$ is non-empty.

(II) For every finite subset $\Xi$ of $\Lambda$, there is some $\lambda \in \Lambda$ such that $\lambda(G) \le \Xi_\cap(G)$.

Notice that conditions (I) and (II) are automatically satisfied if $\Lambda$ is a commutative semigroup.  In the simplest case, when $\Lambda$ is the set of non-negative powers of a single endomorphism $\phi$, we see that $\Con(\Lambda) = \Con(\phi)$ and $\Lambda_\cap(G) = \phi_+(G)$.\end{defn}

\begin{lem}\label{conlem}Let $G$ be a profinite group and let $\Lambda$ be a semigroup of endomorphisms of $G$.  Let $K$ be a closed subgroup of $G$.

(i) The set $\Con(\Lambda,K)$ is a subgroup of $G$, and if $K \unlhd G$ then $\Con(\Lambda,K) \unlhd G$.

(ii) Given $\lambda \in \Lambda$ and $x \in G$, then $\lambda(\Con(\Lambda,K)) \ge \Con(\Lambda,K) \cap \lambda(G)$ and $\ker \lambda \le \Con(\Lambda,K)$.\end{lem}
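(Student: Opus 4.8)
The plan is to reformulate membership in $\Con(\Lambda,K)$ in terms of the family $\mcF$. Since $\mcF$ is upward closed (any set containing a tail $\bigcap_{\xi\in\Xi}\Lambda\xi$ has all its supersets in $\mcF$), the defining condition becomes: $x\in\Con(\Lambda,K)$ if and only if for every open $O\supseteq K$ the set $S_O(x):=\{\sigma\in\Lambda\mid\sigma(x)\in O\}$ lies in $\mcF$. I would also record at the outset that $\mcF$ is a filter: it is upward closed by definition, and it is closed under finite intersections because $\bigcap_{\xi\in\Xi_1}\Lambda\xi\cap\bigcap_{\xi\in\Xi_2}\Lambda\xi=\bigcap_{\xi\in\Xi_1\cup\Xi_2}\Lambda\xi$. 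These two features are all that part (i) needs.

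For part (i), the key point is that in a profinite group one can interpose an open subgroup between the closed subgroup $K$ and any open $O\supseteq K$: by a routine compactness argument (in the spirit of Lemma \ref{obchain}) there is an open normal $N\unlhd_o G$ with $KN\subseteq O$, and $O':=KN$ is then an open subgroup with $K\le O'\le O$, which is moreover normal in $G$ when $K\unlhd G$. Such an $O'$ trivialises the group axioms: given $x,y\in\Con(\Lambda,K)$, the set $S_{O'}(x)\cap S_{O'}(y)$ lies in $\mcF$, and every $\sigma$ in it sends $xy$ into $O'O'=O'\subseteq O$ and $x^{-1}$ into $(O')^{-1}=O'\subseteq O$; hence $S_O(xy),S_O(x^{-1})\in\mcF$. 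For normality one takes $O'$ normal, so that $\sigma(gxg^{-1})=\sigma(g)\sigma(x)\sigma(g)^{-1}\in O'$ whenever $\sigma(x)\in O'$. This settles (i).

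The heart of part (ii) is the claim that $\mcF$ is invariant under right translation: $\Sigma\lambda\in\mcF$ for every $\Sigma\in\mcF$ and $\lambda\in\Lambda$. To see this, write $\Sigma\supseteq\bigcap_{\xi\in\Xi}\Lambda\xi$ for a finite $\Xi$. If this tail is empty then $\emptyset\in\mcF$, so $\mcF$ is the whole power set and there is nothing to prove; otherwise pick $\xi^*\in\bigcap_{\xi\in\Xi}\Lambda\xi$. For each $\xi\in\Xi$ we have $\xi^*=\rho_\xi\xi$ for some $\rho_\xi\in\Lambda$, whence $\Lambda\xi^*\subseteq\Lambda\xi$; thus $\Lambda\xi^*\subseteq\bigcap_{\xi\in\Xi}\Lambda\xi\subseteq\Sigma$, and therefore $\Sigma\lambda\supseteq\Lambda\xi^*\lambda=\Lambda(\xi^*\lambda)$, a single-generator tail, so $\Sigma\lambda\in\mcF$. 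This is exactly where condition (I) would be invoked; the dichotomy above absorbs the degenerate case, so no extra hypothesis is needed.

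Granting right invariance, part (ii) is quick. For the kernel statement, if $\lambda(x)=1$ then for any $O\supseteq K$ every $\sigma=\mu\lambda\in\Lambda\lambda$ satisfies $\sigma(x)=\mu(\lambda(x))=\mu(1)=1\in K\subseteq O$, so $S_O(x)\supseteq\Lambda\lambda\in\mcF$ and $x\in\Con(\Lambda,K)$. For the image statement, take $y\in\Con(\Lambda,K)\cap\lambda(G)$ and any $x_0$ with $\lambda(x_0)=y$; since $(\tau\lambda)(x_0)=\tau(y)$ we get $S_O(x_0)\supseteq S_O(y)\lambda$, and $S_O(y)\in\mcF$ gives $S_O(y)\lambda\in\mcF$ by right invariance, so $S_O(x_0)\in\mcF$ for every $O$ and hence $x_0\in\Con(\Lambda,K)$, giving $y=\lambda(x_0)\in\lambda(\Con(\Lambda,K))$. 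The main obstacle is precisely the right-invariance of $\mcF$: without it one only obtains $S_O(x_0)\supseteq S_O(y)\lambda\supseteq(\bigcap_{\xi\in\Xi}\Lambda\xi)\lambda$, and this last set need not contain a tail unless the intersection is nonempty, which is what the choice of $\xi^*$ repairs.
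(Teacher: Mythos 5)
Your proof is correct and takes essentially the same route as the paper's: the filter property of $\mcF$ plus the reduction to open (normal) subgroups interposed between $K$ and a given open $O \supseteq K$ gives (i), and right translation of members of $\mcF$ by $\lambda$ gives (ii). The only real difference is that you actually prove the right-invariance claim $\Sigma\lambda \in \mcF$ (including the degenerate case of an empty tail), which the paper asserts without justification, so your write-up fills in a detail rather than diverging in method.
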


\begin{proof}(i) Given $x,y \in \Con(\Lambda,K)$, there exist $\Sigma_1,\Sigma_2 \in \mcF$ such that $x \in  \bigcap_{\sigma \in \Sigma_1} \sigma^{-1}(K)$ and $y \in  \bigcap_{\sigma \in \Sigma_2} \sigma^{-1}(K)$.  Since $\mcF$ is a filter we also have $\Sigma_3 = \Sigma_1 \cap \Sigma_2 \in \mcF$, and since the preimages $\sigma^{-1}(K)$ are always subgroups of $G$, it follows that $xy^{-1} \in \bigcap_{\sigma \in \Sigma_3} \sigma^{-1}(K) \subseteq \Con(\Lambda,K)$.  Hence $\Con(\Lambda,K)$ is a group.

Now suppose $K \unlhd G$, let $x \in \Con(\Lambda,K)$, let $O \unlhd_o G$ such that $K \le O$.  Then there is some $\Sigma \in \mcF$ such that $\sigma(x) \in O$ for all $\sigma \in \Sigma$.  This implies that $\sigma(y^{-1}xy) \in O$ for all $\sigma \in \Sigma$ and $y \in G$.  The fact that $\Con(\Lambda,K)$ is normal now follows from the fact that $K$ is the intersection of the open normal subgroups of $G$ that contain it.

(ii) Let $x \in G$ and let $y = \lambda(x)$; suppose $y \in \Con(\Lambda,K)$.  Then for all open sets $O$ containing $K$, there is some $\Sigma \in \mcF$ such that $\sigma \lambda(x) = \sigma(y) \in O$ for all $\sigma \in \Sigma$.  Since $\Sigma\lambda \in \mcF$ it follows that $x \in \Con(\Lambda,K)$.  The case $y=1$ shows that $\ker\lambda \le \Con(\Lambda,K)$, and in general we see that $\lambda(\Con(\Lambda,K)) \ge \Con(\Lambda,K) \cap \lambda(G)$.\end{proof}

\section{Main theorems}

\begin{thm}\label{splitthm}Let $G$ be a countably based profinite group.  Let $\Lambda$ be a semigroup of endomorphisms of $G$ that is stable and satisfies condition (I).  Then $\Con(\Lambda)$ is a closed normal subgroup of $G$ that has trivial intersection with $\Lambda_\cap(G)$.  For any $\lambda \in \Lambda$ we have $G = \Con(\Lambda)\lambda(G)$ and $\Con(\Lambda) \cap \lambda(G) = \lambda(\Con(\Lambda))$.  In particular, $\Lambda$ restricts to a stable semigroup of endomorphisms of $\Con(\Lambda)$ and $\Lambda_\cap(\Con(\Lambda)) = 1$.

If in addition $\Lambda$ satisfies condition (II), then $G = \Con(\Lambda) \rtimes \Lambda_\cap(G)$ and every $\lambda \in \Lambda$ restricts to an automorphism on $\Lambda_\cap(G)$.\end{thm}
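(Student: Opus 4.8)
The plan is to reduce everything to a Fitting‑type statement in finite quotients and then reassemble by an inverse limit. Two facts need no reduction. That $\Con(\Lambda)$ is normal is immediate from Lemma~\ref{conlem}(i) with $K=1$. For closedness, stability gives a base $\{U\}$ of $\Lambda$-invariant open subgroups, and for such $U$ one has $\sigma^{-1}(U)\supseteq U$ for all $\sigma\in\Lambda$; hence $A_U:=\{x\mid\{\sigma\mid\sigma x\in U\}\in\mcF\}$ is a union of left cosets of $U$, so clopen, and $\Con(\Lambda)=\bigcap_U A_U$ is closed. The key preliminary is that stability already yields a base of $\Lambda$-invariant open \emph{normal} subgroups: for $N\unlhd_o G$ put $V_N=N\cap\bigcap_{\lambda\in\Lambda}\lambda^{-1}(N)$, which is normal by Lemma~\ref{normend}, is $\Lambda$-invariant because $\Lambda$ is a semigroup, and is open because it contains any $\Lambda$-invariant open $U\subseteq N$ (stability forces $U\subseteq\lambda^{-1}(N)$ for all $\lambda$). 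So I fix a countable descending base $G=U_0\supseteq U_1\supseteq\cdots$ of $\Lambda$-invariant open normal subgroups and pass to the finite quotients $\bar G_j=G/U_j$ with induced finite semigroups $\bar\Lambda_j\subseteq\End(\bar G_j)$.

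The heart is a Fitting analysis in each $\bar G_j$. Condition (I) passes to $\bar\Lambda_j$, so $M_j=\bigcap_\xi\bar\Lambda_j\xi$ is nonempty; being contained in every principal left ideal it is the \emph{unique} minimal left ideal, hence equals the kernel of $\bar\Lambda_j$, and contains an idempotent $e_j$. Since every $\sigma\in M_j$ satisfies $\sigma e_j=\sigma$, one gets $\ker\sigma\supseteq\ker e_j$, so $\Con(\bar\Lambda_j)=\bigcap_{\sigma\in M_j}\ker\sigma=\ker e_j$; as $M_j$ is two‑sided, $e_j\lambda\in M_j=\bar\Lambda^1 e_j$, whence $\lambda(\ker e_j)\subseteq\ker e_j$. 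Being idempotent, $e_j$ splits $\bar G_j=\ker e_j\rtimes e_j(\bar G_j)$, giving $\ker e_j\cap\bigcap_\sigma\sigma(\bar G_j)\subseteq\ker e_j\cap e_j(\bar G_j)=1$, and a short cardinality argument (using $e_j\in\bar\Lambda\,e_j\lambda$, via Lemma~\ref{shrinkind}) yields $e_j\lambda(\bar G_j)=e_j(\bar G_j)$, i.e. $\bar G_j=\ker e_j\cdot\lambda(\bar G_j)$ for every $\lambda$.

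To globalise, pulling back to $\bar G_j$ shows that $x\in\Con(\Lambda)$ is equivalent, for each $j$, to the set $\{\bar\tau\in\bar\Lambda_j\mid\bar\tau\bar x_j=1\}$ lying in the induced filter; this set is a left ideal, so it lies in the filter exactly when it contains $M_j$, i.e. when $\bar x_j\in\ker e_j$. Hence $\Con(\Lambda)=\varprojlim_j\ker e_j$; the containment $\Con(\Lambda)U_j/U_j\subseteq\ker e_j$ is clear, and the reverse (the lifting) follows once the tower $\ker e_{j+1}\to\ker e_j$ is surjective. \emph{This surjectivity is the main obstacle.} I would settle it using three facts about finite semigroups: the transition $p\colon\bar G_{j+1}\to\bar G_j$ intertwines the semigroups; the image of a kernel under a surjection of finite semigroups is again the kernel; and all idempotents of a minimal left ideal share one endomorphism‑kernel. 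Writing $p\,e_{j+1}=\hat e\,p$ with $\hat e$ an idempotent of $M_j$ (so $\ker\hat e=\ker e_j$), a decomposition $\tilde x=\tilde k\tilde c$ along $\ker e_{j+1}\rtimes e_{j+1}(\bar G_{j+1})$ sends any target in $\ker e_j$ back into $p(\ker e_{j+1})$. Granting this, $\Con(\Lambda)$ surjects onto each $\ker e_j$, and the first part follows: $\Con(\Lambda)\cap\Lambda_\cap(G)=1$ by reduction to the quotient triviality; $G=\Con(\Lambda)\lambda(G)$ because that product is closed and, by the bridge identity together with $\bar G_j=\ker e_j\cdot\lambda(\bar G_j)$, dense; and $\Con(\Lambda)\cap\lambda(G)=\lambda(\Con(\Lambda))$ by combining the finite‑level invariance with Lemma~\ref{conlem}(ii). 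The stated consequences, including $\Lambda_\cap(\Con(\Lambda))=1$, then follow formally.

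For the final statement I bring in condition (II). In $\bar G_j$ it makes the images $\sigma(\bar G_j)$ downward directed with a least element $\bar G_{j,+}=\lambda^*(\bar G_j)$; a size argument shows $\sigma(\bar G_{j,+})=\bar G_{j,+}$, so each $\sigma$ restricts to an automorphism of $\bar G_{j,+}$, and feeding $\lambda^*$ into $\bar G_j=\ker e_j\cdot\lambda(\bar G_j)$ gives $\bar G_j=\ker e_j\cdot\bar G_{j,+}$. Globally, (II) makes $\{\lambda(G)\}$ a downward‑directed family of compacta, so $\Lambda_\cap(G)$ surjects onto each $\bar G_{j,+}$ (the image of a filtered intersection of compacta equals the intersection of the images). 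Hence $\Con(\Lambda)\Lambda_\cap(G)$ is closed and surjects onto every $\bar G_j=\Con(\bar\Lambda_j)\cdot\bar G_{j,+}$, so it is all of $G$; with $\Con(\Lambda)\cap\Lambda_\cap(G)=1$ this gives $G=\Con(\Lambda)\rtimes\Lambda_\cap(G)$, and each $\lambda$ inducing an automorphism of $\Lambda_\cap(G)=\varprojlim_j\bar G_{j,+}$ is inherited level by level. The one genuinely delicate point is the surjectivity of the finite contraction tower; everything else is soft topology or elementary Fitting/semigroup bookkeeping in the finite quotients.
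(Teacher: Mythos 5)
Your proposal is correct, but it takes a genuinely different route from the paper's. The paper never leaves the group $G$: it works with the relative contraction groups $\Con(\Lambda,K)$ for $K \leq_o G$, shows each is open (via stability), runs the index chain $|G:\Con(\Lambda,K)| \ge |\lambda(G):\lambda(G)\cap\Con(\Lambda,K)| \ge |\lambda(G):\lambda(\Con(\Lambda,K))| = |G:\Con(\Lambda,K)|$ to force simultaneously $G = \Con(\Lambda,K)\lambda(G)$ and $\Con(\Lambda,K)\cap\lambda(G) = \lambda(\Con(\Lambda,K))$, and passes to the intersection over a countable chain of $K$'s using Corollary \ref{obcor}; closedness and $\Lambda_\cap(\Con(\Lambda))=1$ come from a direct-system compactness argument showing $\Con(\Lambda,K)=R_\Sigma$ for a single nonempty $\Sigma \in \mcF$, i.e.\ some single $\sigma \in \Lambda$ already contracts $\Con(\Lambda,K)$ into $K$; condition (II) then yields the splitting through Corollary \ref{obcor} again. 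You instead reduce to finite quotients along a base of $\Lambda$-invariant open normal subgroups (your $V_N$ construction, which the paper's proof does not need) and invoke finite semigroup theory: condition (I) forces the induced semigroup $\bar\Lambda_j$ to have a unique minimal left ideal $M_j$, whose idempotents all share one kernel, giving the finite-level splitting $\bar G_j = \ker e_j \rtimes e_j(\bar G_j)$ with $\ker e_j = \Con(\bar\Lambda_j)$; the theorem then becomes inverse-limit bookkeeping whose one delicate point --- surjectivity of the transition maps $\ker e_{j+1} \to \ker e_j$ --- you correctly identify and correctly resolve: for $\bar y \in \ker e_j = \ker \hat e$ and any lift $\tilde z$, the element $\tilde k = \tilde z\, e_{j+1}(\tilde z)^{-1}$ lies in $\ker e_{j+1}$ and satisfies $p(\tilde k) = \bar y\, \hat e(\bar y)^{-1} = \bar y$, which is airtight. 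What the paper's route buys is brevity and self-containedness: no semigroup structure theory, and all statements proved at once for all open $K$. What your route buys is structural clarity: $\Con(\Lambda)$ and $\Lambda_\cap(G)$ are exhibited as inverse limits of the canonical kernel/image splittings of idempotents, and the roles of conditions (I) and (II) become transparent (unique minimal left ideal; least image), with countability entering only to string the quotients into a tower --- exactly parallel to its single use in the paper via the chain fed into Corollary \ref{obcor}. The steps you leave in sketch form (e.g.\ surjectivity of $\lambda$ on $\Lambda_\cap(G) = \varprojlim_j \bar G_{j,+}$ from level-wise surjectivity) are routine compactness arguments, so nothing essential is missing.
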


\begin{proof}Let $N = \Con(\Lambda)$ and let $H = \Lambda_\cap(G)$. 

Let $K \leq_o G$ and for $\Sigma \in \mcF$ write $R_\Sigma =  \bigcap_{\sigma \in \Sigma} \sigma^{-1}(K)$ and let $\mcR = \{R_\Sigma \mid \Sigma \in \mcF\}$. Then $K$ contains a $\Lambda$-invariant open subgroup $L$ of $G$, so $R_\Sigma \ge L$ for all $\Sigma \in \mcF$.  Hence $\Con(\Lambda,K)$ is an open subgroup of $G$, since $\Con(\Lambda,K) = \bigcup \mcR$.  Thus by Lemma \ref{conlem}, we have 
\[|G:\Con(\Lambda,K)| \ge |\lambda(G): \lambda(G) \cap \Con(\Lambda,K)| \ge |\lambda(G):\lambda(\Con(\Lambda,K))| = |G:\Con(\Lambda,K)|,\]
where all indices are finite.  We see that in fact equality must hold for both inequalities here.

For the first inequality, this ensures $\Con(\Lambda,K)\lambda(G) = G$.  We let $K$ range over a descending chain of open subgroups with trivial intersection; the corresponding groups $\Con(\Lambda,K)$ then also form a descending chain.  Applying Corollary \ref{obcor} then gives $N\lambda(G) = G$.

Given Lemma \ref{conlem} (ii), the equality $|\lambda(G): \lambda(G) \cap \Con(\Lambda,K)| = |\lambda(G):\lambda(\Con(\Lambda,K))|$ implies that $\Con(\Lambda,K) \cap \lambda(G) = \lambda(\Con(\Lambda,K))$ for all open subgroups $K$; this implies that $N \cap \lambda(G) = \lambda(N)$, so $N \cap H = \Lambda_\cap(N)$.

We have $\Con(\Lambda,K) = \bigcup \mcR$; in fact, $\mcR$ is a direct system of open subgroups, since $R_{\Sigma_1 \cap \Sigma_2} \ge \langle R_{\Sigma_1},R_{\Sigma_2}\rangle$, so by the compactness of $\Con(\Lambda)$ we must have $\Con(\Lambda,K) = R_\Sigma$ for some $\Sigma$ depending on $K$.  Condition (I) ensures this set $\Sigma$ is non-empty, in other words there exists $\sigma \in \Lambda$ such that $\sigma(\Con(\Lambda,K)) \le K$, so $\Lambda_\cap(\Con(\Lambda,K)) \le K$.  Since $N$ is the intersection of $\Con(\Lambda,K)$ as $K$ ranges over the open subgroups, it follows that $N$ is closed and $\Lambda_\cap(N) = 1$, and hence $N \cap H = 1$.

Now suppose condition (II) holds.  Then we have $N\Xi_\cap(G) = G$ for all finite subsets $\Xi$ of $\Lambda$, so $G = NH$ by Corollary \ref{obcor} and hence $G = N \rtimes H$.

Fix $\lambda \in \Lambda$; it remains to show that $\lambda$ induces an automorphism on $H$. We have $H \cap \ker\lambda \le H \cap N = 1$, so $\lambda$ induces an isomorphism from $H$ to $\lambda(H)$; since $\lambda\Lambda \subseteq \Lambda$, we also have $\lambda(H) \ge H$.  If $\lambda(H) > H$, then since $G=NH$ we must have some $x \in H \setminus \{1\}$ such that $\lambda(x) \in N$.  But then $x \in N$ as in the proof of Lemma \ref{conlem} (ii), so $x \in N \cap H = 1$, a contradiction.\end{proof}

Theorem \ref{splitthm} and Proposition \ref{tfrelstab} combined imply Theorem A.  They also have the following consequence:

\begin{cor}\label{conreg}Let $G$ be a countably based profinite group.  Let $\Lambda$ be a semigroup of endomorphisms of $G$ that is regulated and satisfies conditions (I) and (II).  Then $\Lambda$ is regulated on $\Con(\Lambda)$.\end{cor}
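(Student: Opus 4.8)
The plan is to realise $\Con(\Lambda)$ as a factor of the semidirect decomposition furnished by Theorem \ref{splitthm} and then apply Proposition \ref{tfrelstab}(ii) to that factor. So I would fix a set $\Omega \subseteq \Aut(G)$ that regulates $\Lambda$. Since $\Lambda$ is regulated it is stable on $G$ by Proposition \ref{tfrelstab}(i), and together with conditions (I) and (II) this places me in the setting of Theorem \ref{splitthm}. Writing $N = \Con(\Lambda)$ and $H = \Lambda_\cap(G)$, the theorem yields $G = N \rtimes H$ with $N$ closed and normal, $\lambda(N) = N \cap \lambda(G) \le N$ for all $\lambda \in \Lambda$, and each $\lambda$ restricting to an automorphism of $H$. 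Hence $N$ and $H$ are $\Lambda$-invariant and every element of $\Lambda$ acts surjectively on $H$, so the one remaining hypothesis of Proposition \ref{tfrelstab}(ii) to check is that $N$ and $H$ are also $\Omega$-invariant.

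The key step is this $\Omega$-invariance, which I would derive from condition (a). For $H$, given $\omega \in \Omega$ and $\lambda \in \Lambda$, condition (a) gives $\omega\lambda = \lambda\omega'$ with $\omega' \in \Omega$, so $\omega(\lambda(G)) = \lambda(\omega'(G)) = \lambda(G)$; intersecting over $\lambda \in \Lambda$ shows that $H = \Lambda_\cap(G)$ is $\Omega$-invariant. For $N$ I would work with the open approximations $\Con(\Lambda,K)$, each of which is the union of the subgroups $R_\Sigma = \bigcap_{\sigma \in \Sigma} \sigma^{-1}(K)$ for $\Sigma \in \mcF$. Taking $K$ to be an $\Omega$-invariant open normal subgroup and using condition (a) to write $\sigma\omega = \omega''\sigma$ with $\omega'' \in \Omega$, one checks that $y \in \sigma^{-1}(K)$ implies $\sigma(\omega(y)) = \omega''(\sigma(y)) \in \omega''(K) = K$, so every $\sigma^{-1}(K)$ is $\Omega$-invariant, hence so is each $R_\Sigma$ and therefore $\Con(\Lambda,K)$.

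Because the subgroups $I^\Omega_n(G)$ are $\Omega$-invariant, open and normal, and form a base of neighbourhoods of $1$ by condition (c), one has $N = \bigcap_n \Con(\Lambda, I^\Omega_n(G))$, an intersection of $\Omega$-invariant subgroups, and so $N$ is $\Omega$-invariant. At this point $N$ and $H$ are $\Lambda \cup \Omega$-invariant factors of the decomposition $G = N \rtimes H$ on which $\Lambda$ acts surjectively on $H$, so Proposition \ref{tfrelstab}(ii) applies directly and shows that $\Xi = \Omega \cup \Psi$ regulates $\Lambda$ on $N = \Con(\Lambda)$, where $\Psi$ is the set of conjugation automorphisms of $N$ induced by $H$. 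This is precisely the statement that $\Lambda$ is regulated on $\Con(\Lambda)$.

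I expect the main obstacle to be the $\Omega$-invariance of $N$. The pointwise description of $\Con(\Lambda)$ does not transport cleanly under $\Omega$, since the element $\omega'$ (or $\omega''$) produced by condition (a) depends on the endomorphism it is commuted past and so cannot be chosen uniformly over a whole set $\Sigma \in \mcF$. The way around this is to argue not with individual orbits but with the preimages $\sigma^{-1}(K)$ and the resulting open subgroups $\Con(\Lambda,K)$, for which condition (a) is applied one endomorphism at a time; reducing to $\Omega$-invariant $K$ via condition (c) is then what makes the invariance of $N$, and hence the whole conclusion, follow.
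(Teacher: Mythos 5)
Your proof is correct and is exactly the argument the paper intends: the paper gives no explicit proof, merely asserting that Corollary \ref{conreg} follows from combining Theorem \ref{splitthm} (giving the decomposition $G = \Con(\Lambda) \rtimes \Lambda_\cap(G)$ with $\Lambda$-invariant factors and $\Lambda$ acting on $\Lambda_\cap(G)$ by automorphisms) with Proposition \ref{tfrelstab}(ii). Your write-up supplies the one detail the paper leaves implicit --- the $\Omega$-invariance of $\Con(\Lambda)$ and $\Lambda_\cap(G)$, obtained via condition (a) applied to the subgroups $\sigma^{-1}(K)$ for $\Omega$-invariant $K$ and to the images $\lambda(G)$ --- and does so correctly.
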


The following theorem is a more general form of Theorem B.  Corollary \ref{solcor} can of course be generalised in the same way.

\begin{thm}\label{homogthm}Let $G$ be a profinite group and let $\Lambda \subseteq \End_o(G)$.  Suppose that $\Lambda$ is regulated on $G$.

\begin{enumerate}[(i)]
\item The group $O_\Lambda(G)$ is pronilpotent.
\item Suppose that the set $\{\lambda(G) \mid \lambda \in \Lambda\}$ forms a base of neighbourhoods of the identity.  Then $G = O_\Lambda(G)$, so $G$ is pronilpotent.\end{enumerate}\end{thm}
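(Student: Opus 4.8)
The plan is to prove (i) by reducing to a single $\lambda$ and then verifying that every finite continuous quotient of the resulting contraction group is nilpotent. Each $\Con(\lambda)$ is a closed normal subgroup of $G$ by Lemma \ref{conlem}(i), so $O_\Lambda(G)$ is closed and normal as well. Granting that every $\Con(\lambda)$ is pronilpotent, take $V \unlhd_o O_\Lambda(G)$: the finite group $O_\Lambda(G)/V$ is generated by finitely many of the images $\Con(\lambda)V/V$, each a normal nilpotent subgroup, so Fitting's theorem forces $O_\Lambda(G)/V$ to be nilpotent, and as $V$ is arbitrary $O_\Lambda(G)$ is pronilpotent. This reduces (i) to a single $\phi = \lambda \in \End_o(G)$. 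For this I would apply Theorem \ref{splitthm} to the commutative semigroup $\{\phi^k \mid k \ge 1\}$ (which satisfies (I) and (II), and is stable by Proposition \ref{tfrelstab}(i)) together with Corollary \ref{conreg}, allowing me to replace $G$ by $\Con(\phi)$: restricted there, $\phi$ is again a regulated open self-embedding with $\bigcap_k \phi^k(\Con(\phi)) = 1$. Thus it suffices to treat $G = \Con(\phi)$ with $\bigcap_k \phi^k(G) = 1$ and $\phi$ regulated by some $\Omega$.

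Next I would pass to the finite quotients $Q = G/U$ with $U = I^\Omega_n(G)$, which by Proposition \ref{tfrelstab}(i) are $\phi$-invariant, so that $\phi$ induces an endomorphism $\theta$ of $Q$, and which form a base of neighbourhoods of the identity. Two features of $\theta$ drive the argument. First, since $G = \Con(\phi)$, every element is carried into $U$ by a sufficiently high power of $\phi$, so by finiteness $\theta^k$ is the trivial endomorphism for some $k$; that is, $\theta$ is nil. Second, with $c = |G:\phi(G)|$ a finite constant independent of $n$, we have $\theta(Q) = \phi(G)U/U$ of index at most $c$, and since $Q/\ker\theta \cong \theta(Q) \le Q$ this yields the clean identity $|\ker\theta| = |Q:\theta(Q)| \le c$. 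So $G$ is pronilpotent once I show that a finite group carrying a nil endomorphism $\theta$ with $|\ker\theta| \le c$ must be nilpotent whenever its order is large compared with $c$.

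This finite-group statement is the main obstacle. The kernel filtration $1 = \ker\theta^0 \le \ker\theta^1 \le \dots \le \ker\theta^k = Q$ consists of normal subgroups, and $\theta$ induces injections $\ker\theta^{i+1}/\ker\theta^i \hookrightarrow \ker\theta^i/\ker\theta^{i-1}$, so every factor has order at most $|\ker\theta| \le c$; this bounds the \emph{width} of $Q$ but not its order, and so cannot by itself give nilpotency, as $S_3$ (which admits only the trivial nil endomorphism) shows. The essential extra input is the index bound. If $Q$ were non-nilpotent it would admit a non-nilpotent quotient $Q/M$ by a $\theta$-invariant normal subgroup $M$, carrying an induced nil endomorphism $\bar\theta$, and I would analyse a $\theta$-chief factor inside the non-nilpotent part. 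A non-abelian (perfect) chief factor is excluded because some positive power of $\bar\theta$ must push a nontrivial perfect section into $\ker\bar\theta$, forcing a nontrivial perfect group of order at most $c$ and, after bounding the number of simple factors, an absolute bound on that part; a non-central abelian chief factor is excluded because the contraction cannot be reconciled with an image of bounded index once the order grows. In either case the non-nilpotent part of $Q$ is forced to have order bounded purely in terms of $c$, so as the quotients $G/I^\Omega_n(G)$ grow it must become trivial, and $G$ is pronilpotent. Making this chief-factor bookkeeping uniform in $c$, and genuinely independent of $n$, is the delicate point.

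Finally I would deduce (ii) from (i). Assuming $\{\lambda(G) \mid \lambda \in \Lambda\}$ is a base of neighbourhoods of the identity, for any open subgroup $U$ of $G$ I pick $\lambda \in \Lambda$ with $\lambda(G) \le U$; by the first assertion of Theorem \ref{splitthm} (applied to $\{\lambda^k\}$) we have $G = \Con(\lambda)\lambda(G) \le O_\Lambda(G)U$. As $U$ ranges over a base of neighbourhoods of the identity and $O_\Lambda(G)$ is closed, this gives $G = O_\Lambda(G)$, which is pronilpotent by (i). Since pronilpotency is a condition on finite quotients, the countability hypothesis in Theorem \ref{splitthm} can be sidestepped by working throughout with the finite quotients $G/I^\Omega_n(G)$.
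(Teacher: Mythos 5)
Your reductions are fine, and your part (ii) is essentially the paper's argument, but the core of part (i) is reduced to a finite-group statement that is \emph{false}, so the proof cannot be completed along the route you sketch. The claimed lemma --- that a finite group $Q$ with a nil endomorphism $\theta$ satisfying $|\ker\theta| = |Q:\theta(Q)| \le c$ must be nilpotent once $|Q|$ is large compared with $c$ --- has a counterexample: take $Q_n = A_5^n$ with the one-sided shift $\theta(a_1,a_2,\dots,a_n) = (a_2,\dots,a_n,1)$. Here $\theta^n = 0$, $|\ker\theta| = |Q_n:\theta(Q_n)| = 60$ for every $n$, yet $Q_n$ is perfect and of unbounded order. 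This also defeats your proposed chief-factor bookkeeping: a perfect kernel of order at most $c$ does not bound the number of non-abelian simple factors of $Q$, since the shift pushes the $n$ copies of $A_5$ into the kernel one at a time. The same example explains \emph{why} no argument of your shape can work: for the profinite group $G = \prod_{i \in \bN} A_5$ with the shift $\phi$, the semigroup $\{\phi^k\}$ is stable, $G = \Con(\phi)$, and every property you extract (a base of $\phi$-invariant open normal subgroups whose finite quotients carry nil endomorphisms with kernel and co-image of order $60$) holds, yet $G$ is not pronilpotent. The only hypothesis that fails there is regulation itself --- the existence of $\Omega$ making each $I^\Omega_n(G)$ open and the family a neighbourhood base --- and your reduction to single finite quotients discards exactly that hypothesis. (Indeed the paper's introduction flags that ruling out Cartesian products of non-abelian finite simple groups is the crux of the matter.)

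The paper's proof uses the regulated structure across the whole tower, not one quotient at a time: after arranging $\lambda(G) \le \Phi^\Omega(G)$ (a Frattini-type subgroup built from $\Omega$-invariant subnormal series, shown to be open via the series $N_{i+1} = N_i \cap \Core_{N_i\Omega}((\lambda^{i+1}(G)\cap N_i)\Omega)$), it compares the filtration $G_t = I^\Omega_t(G)$ with $H_t = I^\Omega_t(\lambda(G))$ \emph{through the isomorphism} $\lambda: G \to \lambda(G)$, which preserves $\Omega$-invariance by condition (a). The inclusion $G_{t+1} \ge H_t$ together with $|G:G_t| = |\lambda(G):H_t|$ forces $|G_t:G_{t+1}|$ to divide $k = |G:\lambda(G)|$ for all $t$; then the centraliser of each uniformly small factor $G_t/G_{t+1}$ contains $R = I^\Omega_n(G)$ for a fixed $n$, so $(R \cap G_t)_t$ is a central series for $R$, and $G \cong \lambda^i(G) \le R$ is pronilpotent. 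This cross-tower self-similarity (the group being isomorphic, $\Omega$-compatibly, to a subgroup deep in its own filtration) is precisely the information your finite quotients cannot see, and in the shift example it is what fails: no such $\Omega$ exists there. To salvage your approach you would have to build this comparison into the finite-group statement, at which point you have essentially reproduced the paper's argument.
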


\begin{proof}(i) To show that $O_\Lambda(G)$ is pronilpotent, it suffices to show $\Con(\lambda)$ is pronilpotent for all $\lambda \in \Lambda$, since any profinite group has a unique largest pronilpotent normal subgroup.  Note that the set $\Delta$ of non-negative powers of $\lambda$ is regulated on $G$ and hence also on $\Con(\lambda)$ by Corollary \ref{conreg}.  Thus we may assume $G = \Con(\lambda)$.

Let $k = |G:\lambda(G)|$; we may assume $k > 1$, as otherwise $\lambda$ is an automorphism and $\Con(\lambda)=1$.  Let $\Omega$ be a regulating set for $\Delta$ on $G$, and equip $\Omega$ with the discrete topology.  Define the following series of subgroups of $L = G \rtimes \Omega$:
\[ N_0 = G; \quad N_{i+1} = N_i \cap \Core_{N_i\Omega}((\lambda^{i+1}(G) \cap N_i)\Omega).\]

Let $i \in \bN$.  By construction, $N_i$ is $\Omega$-invariant and we have $N_i \le \lambda^i(G)$ (so in particular $\bigcap N_i = 1$) and $N_{i+1} \unlhd N_i$.

Given $\omega \in \Omega$, note that $\omega\lambda^{i+1} = \lambda^{i+1}\omega'$ for some $\omega' \in \Omega$ (as automorphisms of $G$) by condition (a).  Thus $\lambda^{i+1}(G) \cap N_i$ is normalised by $\Omega$ in $L$, so $(\lambda^{i+1}(G) \cap N_i)\Omega$ is a subgroup of $N_i\Omega$ of index at most $k$, and hence $|N_i:N_{i+1}| = |N_i\Omega:N_{i+1}\Omega| \le k!$.

Now let $M$ be an open subgroup of $G$ that is maximal subject to the conditions that $M$ is $\Omega$-invariant and that there is an $\Omega$-invariant closed subnormal series from $M$ to $L$; let $\Phi^\Omega(G)$ be the intersection of all such subgroups $M$.  Then clearly $M \lhd G$, so $M$ is a maximal $\Omega$-invariant open normal subgroup of $G$.  As $M$ is a proper open subgroup of $G$, there is some $i$ such that $N_i \not\le M$ but $N_{i+1} \le M$.  Thus $MN_i$ is $\Omega$-invariant, and there is an $\Omega$-invariant subnormal series from $MN_i$ to $G$; thus $MN_i = G$ by the maximality of $M$.  This ensures that $|G:M| = |MN_i:MN_{i+1}| \le |N_i:N_{i+1}| \le k!$.  It follows that $\Phi^\Omega(G) \ge I^\Omega_{k!}(G)$, so $\Phi^\Omega(G)$ is an open subgroup of $G$.  Thus there exists $i$ such that $\lambda^i(G) \le \Phi^\Omega(G)$; by replacing $\lambda$ with $\lambda^i$ we may assume $i=1$.

Let $H = \lambda(G)$.  Given $t \in \bN$, let $G_t = I^\Omega_t(G)$ and let $H_t = I^\Omega_t(H)$.  Consider a proper $\Omega$-invariant open normal subgroup $K$ of $G$ such that $|G:K| \le t+1$.  Then $KH$ is a proper open subgroup of $G$, since it is contained in $K\Phi^\Omega(G)$.  In addition, $H \cap K$ is an $\Omega$-invariant open normal subgroup of $H$, and $|H:H \cap K|=|KH:K| < |G:K|$; in particular, $|H:H \cap K| \le t$.  Hence $K$ contains $H_t$.  Since this argument holds for all $t$, and for all proper $\Omega$-invariant open normal subgroups of $G$ of index at most $t+1$, it follows that $G_{t+1} \ge H_t$ for all $t$.  Hence:
\[ |G:H_t|= k|H:H_t| = |G:G_t||G_t:G_{t+1}||G_{t+1}: H_t|.\]
Now $\lambda$ induces an isomorphism from $G$ to $H$ that sends $\Omega$-invariant subgroups to $\Omega$-invariant subgroups (by virtue of condition (a) in Definition \ref{regdef}), so $|H:H_t| = |G:G_t|$ and hence $|G_t:G_{t+1}|$ divides $k$ for all $t$.

Let $n$ be the largest order of $\Aut(F)$, as $F$ ranges over all finite groups of order dividing $k$, and let $R =  I^\Omega_n(G)$. Then for all $t$, the centraliser of $G_t/G_{t+1}$ in $G$ is an $\Omega$-invariant open normal subgroup of $G$ of index at most $n$, and so contains $R$; thus the series $(R \cap G_t)_{t \in \bN}$ is a central series for $R$, and in particular $R$ is pronilpotent.  Moreover $R$ has finite index in $G$, so there is some $i$ such that $\lambda^i(G) \le R$; since $G \cong \lambda^i(G)$, it follows that $G$ is pronilpotent.

(ii) Suppose $O_\Lambda(G) < G$.  Then there is some proper open subgroup $K$ of $G$ containing $O_\Lambda(G)$, and some $\lambda \in \Lambda$ such that $\lambda(G) \le K$ and hence $O_\Lambda(G)\lambda(G) \le K$; in particular, $\Con(\lambda)\lambda(G) \le K$.
However, by Proposition \ref{tfrelstab} and Theorem \ref{splitthm} we have $\Con(\lambda)\lambda(G) = G$, a contradiction.\end{proof}

\paragraph{Acknowledgements}My thanks go to Charles Leedham-Green for helpful discussions on an earlier version of this paper.

\end{document}